\theoremstyle{plain}
\newtheorem{prop}{Proposition}[section]
\newtheorem{thm}[prop]{Theorem}
\newtheorem{cor}[prop]{Corollary}
\newtheorem{lem}[prop]{Lemma}
\theoremstyle{definition}
\newtheorem{dfn}[prop]{Definition}
\newtheorem{rem}[prop]{Remark}
\newtheorem{rems}[prop]{Remarks}
\newtheorem{notat}[prop]{Notation}
\newtheorem{lab}[prop]{}
\newcommand{\isoto}{\overset{\sim}{\to}}
\newcommand{\labelto}[1]{\overset{#1}{\longrightarrow}}
\renewcommand{\iff}{\Leftrightarrow}
\renewcommand{\subset}{\subseteq}
\newcommand{\C}{{\mathbb{C}}}
\newcommand{\G}{{\mathbb{G}}}
\newcommand{\N}{{\mathbb{N}}}
\renewcommand{\P}{{\mathbb{P}}}
\newcommand{\R}{{\mathbb{R}}}
\newcommand{\Z}{{\mathbb{Z}}}
\newcommand{\scrE}{{\mathscr{E}}}
\newcommand{\scrK}{{\mathscr{K}}}
\newcommand{\scrO}{{\mathscr{O}}}
\DeclareMathOperator{\Div}{Div}
\renewcommand{\div}{{\rm div}}
\DeclareMathOperator{\Gal}{Gal}
\DeclareMathOperator{\Pic}{Pic}
\DeclareMathOperator{\rk}{rk}
\DeclareMathOperator{\Spec}{Spec}
\DeclareMathOperator{\spn}{span}
\DeclareTextFontCommand{\textnf}{\normalfont}
\newcommand{\et}{{\textnf{\'et}}}
\newcommand{\reg}{{\rm reg}}
\newcommand{\sing}{{\rm sing}}
\newcommand{\x}{{\mathtt{x}}}
\renewcommand{\setminus}{\smallsetminus}
\renewcommand{\epsilon}{\varepsilon}
\newcommand{\mal}{\,.\,}
\newcommand{\ol}{\overline}
\newcommand{\plus}{{\scriptscriptstyle+}}
\newcommand{\wh}[1]{\widehat{#1}}
\newcommand{\To}{\Rightarrow}
\newcommand{\Label}[1]{\label{#1}}
\begin{document}

\title
[Extreme positive ternary sextics]
{Extreme positive ternary sextics}

\author{Aaron Kunert, \ Claus Scheiderer}

\subjclass[2010]
{Primary 14\,P\,05;
secondary
14\,C\,22,
14\,H\,45}

\address{Fachbereich Mathematik und Statistik, Universit\"at Konstanz,
  Germany}

\maketitle

%-------------------------------------------------------------------%

\begin{abstract}
We study nonnegative (psd) real sextic forms $q(x_0,x_1,x_2)$ that
are not sums of squares (sos). Such a form has at most ten real
zeros.
We give a complete and explicit characterization of all sets
$S\subset\P^2(\R)$ with $|S|=9$ for which there is a psd non-sos
sextic vanishing in $S$.
Roughly, on every plane cubic $X$ with only real nodes there is a
certain natural divisor class $\tau_X$ of degree~$9$, and $S$ is the
real zero set of some psd non-sos sextic if, and only if, there is a
unique cubic $X$ through $S$ and $S$ represents the class $\tau_X$ on
$X$. If this is the case, there is a unique extreme ray
$\R_\plus q_S$ of psd non-sos sextics through $S$, and we show how to
find $q_S$ explicitly.
The sextic $q_S$ has a tenth real zero which for generic $S$ does not
lie in $S$, but which may degenerate into a higher singularity
contained in $S$. We also show that for any eight points in
$\P^2(\R)$ in general position there exists a psd sextic that is not
a sum of squares and vanishes in the given points.
\end{abstract}

%-------------------------------------------------------------------%

\section*{Introduction}

In a famous and influential paper, Hilbert \cite{hi88} proved in 1888
that a polynomial with real coefficients that takes nonnegative
values only can usually not be written as a sum of squares of real
polynomials. More precisely, there exist nonnegative ternary forms of
any even degree $\ge6$ that are not sums of squares. For forms in
four or more variables, the same is true for even degrees~$\ge4$.
Hilbert could well have used his own arguments to
construct concrete examples of such forms. However he didn't do so,
and it took almost 80 more years before first explicit examples
appeared in print.

It has become common use to say that a real form is \emph{psd}
(positive semidefinite) if it has nonnegative values, and that it is
\emph{sos} (sum of squares) if it is a sum of squares of forms. First
examples of psd forms that are not sos were found in the 1960s by
Motzkin, Ellison and Robinson, being verified in ad~hoc ways. In the
1970s, Choi, Lam and later Reznick started to explore the phenomenon
in more systematic ways.
In particular, they studied \emph{extreme} psd forms, which are psd
forms that are not squares and cannot be written as a sum of psd
forms in a nontrivial way. Closely related is the study of the
possible real zero sets of psd non-sos forms. On the other hand they
constructed a variety of new examples.
See Reznick's paper \cite{rz00} for a detailed account
with precise references of work done up to around the year 2000. In
\cite{rz07}, Reznick formalized the arguments from Hilbert's proof,
thereby showing in the case of ternary sextics that whenever two
plane cubics intersect in nine different $\R$-points, there exists a
psd non-sos sextic through any eight of them.

In this paper we focus entirely on ternary sextics. A psd sextic that
is not sos has at most ten real zeros. Our main contribution is a
complete and explicit characterization of all sets of nine points in
$\P^2(\R)$ on which some psd non-sos sextic vanishes.
To describe the result let $X$ be a reduced plane cubic, and let $S$
be a set of 9 nonsingular $\R$-points on $X$. We say that $S$ is
admissible if the class of the Weil divisor $\sum_{P\in S}P$ on $X$
is a non-trivial half of $[\scrO_X(6)]$, and if this class satisfies
a certain definiteness condition (\ref{dfntaux}). For every
admissible $S$ there exists a psd non-sos sextic that vanishes in
$S$. In fact, up to positive scaling there is a unique such sextic,
denoted $q_S$, that is extreme. Conversely, for every psd non-sos
sextic with a set $S$ of nine real zeros, there is a unique cubic $X$
through $S$, and $S$ is admissible in the sense before
(\ref{admiffexpsdnonsossext}).

As for the cubics that arise in this construction, we show that a
cubic $X$ contains an admissible set if and only if $X$ has at most
real nodes (with real tangents) as singularities. Moreover, the
divisor class in question is unique in this case
(\ref{def2torscubic}).

Our characterization is explicit enough to make it easy, for any
given set $S$, to check effectively whether $S$ is admissible, and in
the positive case, to write down concretely a psd non-sos sextic
through $S$, or even the extreme sextic $q_S$. In this way we obtain
an explicit parametrization of the extreme psd sextics with at least
nine real zeros. Note that these sextics are dense inside the set of
all extreme psd sextics. Since generically an extreme psd sextic has
ten real zeros, our parametrization happens in a 10:1 fashion.
Recently, Blekherman et~al.\ showed \cite{bhors} that the extreme psd
sextics form a Zariski dense subset of the Severi variety of all
rational sextics. In particular, they form a family of (projective)
dimension~$17$. Our approach gives a new and very clear approach to
these facts. We apply our result to prove that any 8 points of the
plane are the real zero set of a psd non-sos sextic, unless 4 of them
are on a line or 7 are on a conic (\ref{8ptsexpsdnonsos}). Before,
this was known only under additional assumptions.

The paper is organized as follows. In Section~2 we discuss Picard
groups of real (possibly singular) curves. We define definite
$2$-torsion classes in the Picard group and determine these classes
in the case of plane cubics. In Section~3 we show that every
set of nine real zeros of a psd non-sos sextic is admissible. The
converse is proved in Section~4, where we also state a number of
complements to the main result. Section~5 contains a series of examples
illustrating various aspects of our construction.

%-------------------------------------------------------------------%

\section{Preliminaries and notation}

We work with ternary forms only. Let $d\ge0$. By $P_{2d}$ (resp.\
$\Sigma_{2d}$) we denote the set of all psd (positive semidefinite)
forms $f\in\R[x_0,x_1,x_2]$ with $\deg(f)=2d$ (resp.\ its subset of
all sos forms, i.e.\ forms that can be written as a finite sum of
squares of forms). It is well known that $\Sigma_{2d}\subset P_{2d}$
are closed convex cones, and that the inclusion is proper if and only
if $2d\ge6$.

If $C$ is any convex cone with $C\cap(-C)=\{0\}$, the extreme rays of
$C$ are the one-dimensional faces of $C$. In the case of the psd cone
$P_{2d}$ we reserve the term for forms that are not sums of squares.
Thus, we will say that a psd form $q\in P_{2d}$ is \emph{extreme} if
$q\notin\Sigma_{2d}$, and if $q=q_1+q_2$ with $q_1,\,q_2\in P_{2d}$
implies $q_1,\,q_2\in\R_\plus q$.

All varieties $X$ (usually curves) are defined over $\R$ and are
considered as $\R$-schemes. As usual, $X_\sing$ resp.\ $X_\reg$
denotes the singular resp.\ the nonsingular locus of $X$, and
$X(\R)$ (resp.\ $X(\C)$) is the set of $\R$-rational (resp.\
$\C$-rational) points of $X$. For a form $f\in\R[x_0,x_1,x_2]$,
$V(f)$ is the curve $f=0$ in $\P^2$, and $V_\R(f)$ denotes the set
of its real points. When $f,\,g\in\R[x_0,x_1,x_2]$ are forms without
common irreducible component and $P\in\P^2(\R)$, the local
intersection number of $f$ and $g$ at $P$ is written $i_P(f,g)$.

Let $X$ be a curve over $\R$. A scheme point $x\in X$ is called a
\emph{node} (resp.\ an \emph{acnode}) of $X$ if $x$ has residue
field $\R$ and has multiplicity 2 with two different tangent
directions, where the tangents are real resp.\ complex conjugate.
Thus $x$ is a node iff $\wh\scrO_{X,x}\cong\R[[u,v]]/(uv)$, and is an
acnode iff $\wh\scrO_{X,x}\cong\R[[u,v]]/(u^2+v^2)$. The real curve
$X(\R)$ has two branches intersecting transversally at $x$ when $x$
is a node, and has an isolated point at $x$ when $x$ is an acnode.

For $S\subset\P^2(\R)$ a finite set, we denote by $I_d(mS)$ the space
of degree~$d$ forms in $\R[x_0,x_1,x_2]$ that have multiplicity
$\ge m$ at every point $P\in S$.

%-------------------------------------------------------------------%

\section{Picard groups of real curves}

We need to work with groups of divisor classes not only on
nonsingular (plane) curves, but also on singular and even reducible
curves. Therefore we need to discuss Picard groups in this
generality.

\begin{lab}
Let $X$ be a reduced projective curve over $\R$, always considered as
a scheme over $\R$. We allow $X$ to be reducible. By $X_\C$ we denote
the base field extension $X_\C=X\times_{\Spec(\R)}\Spec(\C)$. The
ring of rational functions on $X$ is $\R(X)=H^0(X,\scrK_X)$, and
similarly $\C(X)=H^0(X_\C,\scrK_{X_\C})$. The group of Cartier
divisors on $X$ is $\Div(X)=H^0(X,\scrK_X^*/\scrO_X^*)$, linear
equivalence of divisors on $X$ is denoted by~$\sim$. The Picard group
$\Pic(X)$ is (isomorphic to) the group of divisors on $X$ modulo
linear equivalence, i.e., the natural sequence
$$1\to\scrO(X)^*\to\R(X)^*\to\Div(X)\to\Pic(X)\to0$$
is exact.

For $U\subset X$ open there is a natural map $\Div(U)\to\Div(X)$, the
extension by zero. If $U$ is dense in $X$ then this map is surjective
up to linear equivalence. In particular, this applies to $U=X_\reg$,
the nonsingular locus of $X$. Since Cartier divisors on $X_\reg$ can
be identified with Weil divisors (zero cycles) on $X_\reg$, we will
often tacitly represent divisor classes on $X$ by Weil divisors
on~$X_\reg$.
\end{lab}

\begin{lab}
Assume from now on that $X$ is geometrically connected and that the
set $X(\R)$ of $\R$-rational points is Zariski dense in $X$. The
Galois group $G=\Gal(\C/\R)$ acts on $\Pic(X_\C)$, and the
Hochschild-Serre sequence $H^i(G,\,H^j_\et(X_\C,\G_m))$ $\To$
$H^{i+j}_\et(X,\G_m)$ implies that the natural map $\Pic(X)\to
\Pic(X_\C)^G$ is an isomorphism.

Let $X_1,\dots,X_m$ be the irreducible components of $X$, and let $J$
be the generalized Jacobian of $X$. Then $J$ is a connected algebraic
group over $\R$, and we have the exact sequence
$$0\to J(\C)\to\Pic(X_\C)\to\Z^m\to0$$
where the last map is given by the partial degrees
\cite{blr}.
The Galois action on this sequence gives the exact sequence
$$0\to J(\R)\to\Pic(X)\to\Z^m\to0$$
(the last map is surjective since every component $X_i$ contains a
nonsingular $\R$-point of $X$) and the isomorphism
$$H^1(\R,J)\isoto H^1(G,\,\Pic(X_\C))$$
(writing $H^1(\R,J):=H^1(G,J(\C))$ as usual).
\end{lab}

Given an abelian group $M$, we denote by ${}_nM=\ker(M\labelto nM)$
the $n$-torsion subgroup of $M$, for $n\in\N$. For any $G$-module $M$
there is a natural map ${}_2M^G\to H^1(G,M)$. We consider this map
for $M=\Pic(X_\C)$:

\begin{prop}\Label{2pich1pic}%
The natural map ${}_2\Pic(X)\to H^1(G,\,\Pic(X_\C))$ is surjective.
\end{prop}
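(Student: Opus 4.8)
The plan is to reduce the assertion to the generalized Jacobian $J$ and there exploit that $J(\C)$ is divisible. Recall first that the natural map $\mu_M\colon{}_2M^G\to H^1(G,M)$ is functorial in the $G$-module $M$: for $x\in{}_2M^G$ it sends $x$ to the class of the $1$-cocycle $\sigma\mapsto x$, which is well defined because $(1+\sigma)x=2x=0$. I would apply functoriality to the inclusion $J(\C)\hookrightarrow\Pic(X_\C)$, together with the isomorphism $H^1(\R,J)\isoto H^1(G,\Pic(X_\C))$ established above (this isomorphism is precisely the map induced by that inclusion, the point being $H^1(G,\Z^m)=0$). The resulting square is commutative and its right-hand vertical arrow is an isomorphism; since moreover ${}_2J(\R)={}_2J(\C)^G\subset{}_2\Pic(X)$, surjectivity of $\mu_{J(\C)}\colon{}_2J(\R)\to H^1(\R,J)$ will imply surjectivity of $\mu_{\Pic(X_\C)}\colon{}_2\Pic(X)\to H^1(G,\Pic(X_\C))$, which is what we want.

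For the Jacobian, the key input is that $J(\C)$ is a divisible abelian group, being the group of complex points of a connected commutative algebraic group over the algebraically closed field $\C$. Hence multiplication by $2$ yields a short exact sequence of $G$-modules $0\to{}_2J(\C)\to J(\C)\labelto{2}J(\C)\to0$. In the associated long exact cohomology sequence the map $H^1(G,J(\C))\labelto{2}H^1(G,J(\C))$ vanishes, because the cohomology of the finite group $G$ in positive degrees is annihilated by $|G|=2$; therefore $H^1(G,{}_2J(\C))\to H^1(G,J(\C))$ is surjective. Since $G$ is cyclic of order $2$ and ${}_2J(\C)$ is killed by $2$, the natural surjection $H^0(G,{}_2J(\C))\to H^1(G,{}_2J(\C))$ exists, and one checks that the composite $H^0(G,{}_2J(\C))={}_2J(\R)\to H^1(G,J(\C))$ is exactly $\mu_{J(\C)}$. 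This gives the required surjectivity. Equivalently, one can argue by hand: given a $1$-cocycle $\sigma\mapsto a$ with $a\in J(\C)$ and $(1+\sigma)a=0$, write $a=2b$ and put $x:=b+\sigma b$; then $x\in{}_2J(\R)$ and $\sigma\mapsto x$ is cohomologous to $\sigma\mapsto a$.

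The argument is essentially formal once these two ingredients are in place, and I do not expect a genuinely hard step. The part that most needs care is the bookkeeping of the first paragraph: verifying that the maps $\mu_M$ are compatible with $J(\C)\hookrightarrow\Pic(X_\C)$ and with the isomorphism $H^1(\R,J)\isoto H^1(G,\Pic(X_\C))$, and recording cleanly why $J(\C)$ is divisible (it is an extension of an abelian variety by a linear group, and over $\C$ abelian varieties, tori and $\G_a$ all have divisible point groups). The remaining facts — vanishing of $H^1(G,\Z^m)$ (already absorbed into the cited isomorphism) and the torsion structure of the cohomology of a cyclic group — are standard.
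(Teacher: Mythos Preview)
Your proof is correct and follows essentially the same route as the paper: both reduce to the generalized Jacobian via the commutative square and the isomorphism $H^1(\R,J)\isoto H^1(G,\Pic(X_\C))$. The only difference is that the paper cites \cite{sch:tams} for the surjectivity of ${}_2J(\R)\to H^1(\R,J)$, whereas you supply a self-contained argument via the divisibility of $J(\C)$ (and your ``by hand'' computation is exactly the content of that cited lemma).
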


\begin{proof}
The map ${}_2J(\R)\to H^1(\R,J)$ is surjective, see \cite{sch:tams},
proof of Lemma 2.3(b).
So the assertion follows from the commutative diagram
$$\begin{xy}\xymatrix{%
{}_2J(\R) \ar[r] \ar[d] & H^1(\R,J) \ar[d] \\
{}_2\Pic(X) \ar[r] & H^1(G,\,\Pic(X_\C))
}\end{xy}$$
in which the right hand vertical arrow is surjective, see above.
\end{proof}

\begin{lab}\Label{dfnpsi}%
Let $C(X)$ denote the set of connected components of $X(\R)$, and let
the finite abelian group $A_X$
be defined by the exact sequence
$$1\to\{\pm1\}\to\{\pm1\}^{C(X)}\to A_X\to1$$
where the first map is the diagonal embedding. We define a
homomorphism
$$\psi\colon H^1(G,\,\Pic(X_\C))\>\to\>A_X$$
as follows (cf.\ \cite{sch:tams} 2.2). Given a Weil divisor $D$ on
$X_\C$ with nonsingular support and with $D+\ol D\sim0$, there is a
rational function $g\in\C(X)^*$ with $\div(g)=D+\ol D$, and $g$ can
be chosen to lie in $\R(X)^*$.
Therefore $g$ has even order in every point of $X_\reg(\R)$ and is
invertible around $X_\sing(\R)$. So there exists a sign tuple
$\epsilon\in\{\pm1\}^{C(X)}$ such that $\epsilon(\xi)g(\xi)\ge0$ for
every $\xi\in X(\R)$ where $g$ is defined. Since $g$ depends on $D$
only up to a factor in $\scrO(X)^*=\R^*$, this gives a well-defined
element of $A_X$, which does not change if $D$ gets replaced by an
equivalent Weil divisor on $(X_\C)_\reg$.
So the map $\psi$ that sends the class of $D$ to $\pm\epsilon\in A_X$
is a well-defined homomorphism.
\end{lab}

\begin{prop}\Label{psisurj}%
The map $\psi\colon H^1(G,\,\Pic(X_\C))\to A_X$ is surjective.
\end{prop}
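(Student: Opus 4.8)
The plan is to exhibit, for each connected component $c\in C(X)$, an explicit divisor class in $H^1(G,\Pic(X_\C))$ whose image under $\psi$ is the class of the sign tuple $\epsilon^{(c)}$ that is $-1$ on $c$ and $+1$ elsewhere; since these tuples generate $A_X$ (indeed $\{\pm1\}^{C(X)}$ surjects onto $A_X$ and each such $\epsilon^{(c)}$ together with the constant tuples generates the source), surjectivity of $\psi$ follows. So fix a component $c$ of $X(\R)$ and pick a real point $P\in c$ that is nonsingular on $X$, and a second nonsingular real point $Q$ lying on some other component of $X(\R)$ (such $Q$ exists if $|C(X)|\ge2$; if $X(\R)$ is connected then $A_X$ is trivial and there is nothing to prove, so assume $|C(X)|\ge 2$).

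The key construction is to choose a nonsingular point $R\in X_\C(\C)\setminus X(\R)$, i.e.\ a genuinely complex point, and consider the Weil divisor $D=R-\ol R$ on $(X_\C)_\reg$. Then $D+\ol D=0$, so trivially $D+\ol D\sim0$ with $g=1$, which is useless. Instead I would argue as follows: by Proposition~\ref{2pich1pic} the map ${}_2\Pic(X)\to H^1(G,\Pic(X_\C))$ is surjective, so it suffices to show that the composite ${}_2\Pic(X)\to H^1(G,\Pic(X_\C))\to A_X$ is surjective. Given any target sign tuple, the idea is to realize it by a $2$-torsion \emph{line bundle} of the shape $\scrO_X(D)$ where $2D\sim0$ is represented by a Weil divisor supported on $X_\reg(\R)$. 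Concretely, on $X_\reg$ one can find a rational function $g\in\R(X)^*$ whose divisor is twice a divisor supported away from the chosen points and whose sign on each component $c\in C(X)$ is prescribed: e.g.\ take $g$ to be (the restriction of) a ratio of two real forms $F/G$ of equal degree such that $V_\R(F)\cap X$ and $V_\R(G)\cap X$ meet the relevant components with even total multiplicity while the sign of $F/G$ flips exactly along $c$. Setting $D=\frac12\div(g)$ (a genuine Weil divisor on $X_\reg$ since all multiplicities of $\div(g)$ turn out even — this is where one is careful near $X_\sing(\R)$, using that $g$ is a unit there) gives a class $[D]\in{}_2\Pic(X)$ whose image under $\psi$ is, by the very definition of $\psi$ in \ref{dfnpsi}, the sign tuple of $g$, namely the prescribed one.

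The step I expect to be the main obstacle is producing, for each prescribed sign pattern $\epsilon\in\{\pm1\}^{C(X)}$, an actual rational function $g\in\R(X)^*$ on $X$ with $\div(g)$ even on $X_\reg$, $g$ a unit around $X_\sing(\R)$, and $\mathrm{sign}(g)=\epsilon$ on $X(\R)$ — in other words, showing the prescribed sign tuple is \emph{hit}, not merely that some tuple is. A clean way to handle this: by topology, for each component $c$ there is a real form $\ell_c$ (of suitable degree, a product of lines) meeting $X$ transversally in an even number of real points such that $X(\R)\setminus V_\R(\ell_c)$ has $c$ changing sign, i.e.\ $\ell_c$ restricted to $X(\R)$ changes sign precisely when one crosses $c$; then $g_c:=\ell_c/\ell_0$ for a fixed-sign reference form $\ell_0$ of the same degree does the job after squaring away odd multiplicities, and its $\psi$-image is $\pm\epsilon^{(c)}$. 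Assembling these (and invoking \ref{2pich1pic} to lift from $H^1$) yields all of $A_X$. Alternatively — and this is probably what the authors do — one reduces to the nonsingular (normalized) curve where the analogous surjectivity onto the sign group is classical (it is exactly the statement that $H^1(\R,\mathrm{Jac})\to A$ is onto, cf.\ \cite{sch:tams}), and then transports the class back along the normalization map $\tilde X\to X$, checking compatibility of the two $\psi$'s; the obstacle there is the bookkeeping at $X_\sing(\R)$, ensuring that pulling back or pushing forward does not alter the sign tuple, which is exactly why \ref{dfnpsi} was set up to require $g$ invertible around $X_\sing(\R)$.
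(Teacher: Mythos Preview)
Your proposal correctly identifies the crux --- producing, for each prescribed sign tuple $\epsilon\in\{\pm1\}^{C(X)}$, a real rational function on $X$ realizing that sign pattern --- but does not actually resolve it. The construction you sketch with a form $\ell_c$ ``meeting $X$ transversally in an even number of real points'' and simultaneously having $\ell_c$ ``change sign precisely when one crosses $c$'' is incoherent as written: a transversal real intersection point is exactly a point where $\ell_c|_{X(\R)}$ changes sign, so such a form cannot have constant sign on the components of $X(\R)$. The follow-up phrase ``after squaring away odd multiplicities'' does not repair this, and the alternative route via normalization is only gestured at. Moreover, you are in effect trying to prove surjectivity of the composite $\sigma$ (a real divisor $D$ with $2D\sim0$) and then deduce surjectivity of $\psi$; this detour requires $\div(g)$ to be globally divisible by~$2$, which is strictly harder to arrange than what $\psi$ actually needs.

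The paper's proof is direct and avoids all of this. Given $\epsilon$, one invokes Weierstra{\ss} approximation to produce $f\in\R(X)^*$ with \emph{no real zeros or poles at all}, invertible near $X_\sing$, and with $\epsilon(\xi)f(\xi)\ge0$ on $X(\R)$. Since $\div(f)$ on $X_\C$ is $G$-invariant with empty real support, it splits as $D+\ol D$ for some $D\in\Div(X_\C)$; then $D+\ol D=\div(f)\sim0$, so $[D]$ defines a class in $H^1(G,\Pic(X_\C))$, and by the definition of $\psi$ (taking $g=f$) its image is $\pm\epsilon$. The point you are missing is that one should aim for a function with no real zeros rather than one with even-order real zeros; Weierstra{\ss} approximation delivers this in one stroke.
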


\begin{proof}
Let a sign distribution $\epsilon\in\{\pm1\}^{C(X)}$ be given. By
Weierstra\ss\ approximation there exists a rational function
$f\in\R(X)^*$ without real zeros or poles that is invertible around
every singular point of $X$, and such that $\epsilon(\xi)f(\xi)\ge0$
holds for every $\xi\in X(\R)$.
The divisor $\div(f)$ of $f$ on $X_\C$ is a $G$-invariant Weil
divisor on $(X_\C)_\reg$ without real points. Hence it can be written
$\div(f)=D+\ol D$ with $D\in\Div(X_\C)$. The class defined by $D$ in
$H^1(G,\,\Pic(X_\C))$ maps to $\pm\epsilon$ under~$\psi$.
\end{proof}

When $X$ is nonsingular, the map $\psi$ in \ref{psisurj} is even
bijective, see \cite{sch:tams} Remark~2.2.

\begin{cor}\Label{sigmasurj}%
The map $\sigma\colon{}_2\Pic(X)\to A_X$ which is the composite of
the map \ref{2pich1pic} and of $\psi$ (see \ref{dfnpsi}) is
surjective.
\qed
\end{cor}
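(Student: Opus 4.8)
The plan is to observe that $\sigma$ is literally the composite of two maps that have already been shown to be surjective in the immediately preceding results. By construction (see the statement of \ref{sigmasurj} together with \ref{2pich1pic} and \ref{dfnpsi}), $\sigma$ factors as
$$ {}_2\Pic(X)\;\longrightarrow\;H^1(G,\,\Pic(X_\C))\;\overset{\psi}{\longrightarrow}\;A_X, $$
where the first homomorphism is the natural map of Proposition \ref{2pich1pic} and the second is the homomorphism $\psi$ of \ref{dfnpsi}. Proposition \ref{2pich1pic} asserts that the first arrow is surjective, and Proposition \ref{psisurj} asserts that $\psi$ is surjective. Since a composite of surjective group homomorphisms is surjective, $\sigma$ is surjective, which is exactly the assertion.

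There is essentially no obstacle here: all the substantive work has already been carried out in the two cited propositions, which in turn rely on the surjectivity of ${}_2J(\R)\to H^1(\R,J)$ (from \cite{sch:tams}) and on Weierstraß approximation on $X$. The only point worth a moment's care is purely bookkeeping, namely that the map called $\sigma$ and used in the later sections is indeed the composite through $H^1(G,\,\Pic(X_\C))$ described above, so that the two surjectivity statements apply verbatim; this is immediate from the definitions.
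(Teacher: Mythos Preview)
Your argument is correct and is precisely the one the paper intends: the corollary carries a \qed\ with no further proof because it follows immediately by composing the surjections of Propositions~\ref{2pich1pic} and~\ref{psisurj}. There is nothing to add.
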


\begin{dfn}\Label{dfndef}%
A $2$-torsion class $\tau\in{}_2\Pic(X)$ will be called
\emph{definite} if it lies in the kernel of the natural map
$\sigma\colon{}_2\Pic(X)\to A_X$ (see \ref{sigmasurj}).
\end{dfn}

So if $D$ is a Weil divisor on $X$ with nonsingular support and with
$2D\sim0$, the class $[D]$ is definite if and only if there exists a
rational function $g\in\R(X)^*$, invertible around $X_\sing$, with
$\div(g)=2D$ and with $g\ge0$ on $X(\R)$ (where $g$ is defined).

For plane cubics we determine the definite $2$-torsion classes
explicitly:

\begin{prop}\Label{def2torscubic}%
Let $X$ be a reduced plane cubic curve with $X(\R)$ Zariski dense in
$X$. If every singularity of $X$ is a node, there exists a unique
nonzero class in ${}_2\Pic(X)$ that is definite. Otherwise the only
definite class in ${}_2\Pic(X)$ is zero.
\end{prop}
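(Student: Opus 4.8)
The plan is to split the statement into two halves — first the classification of which cubics admit a nonzero definite class, second the uniqueness of that class — and to reduce everything to the structure of $\Pic(X_\C)^G$ together with the map $\sigma$ of \ref{sigmasurj}.

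\emph{Reducing to the generalized Jacobian.} For a reduced plane cubic $X$ with $X(\R)$ Zariski dense, the generalized Jacobian $J$ is determined by the type of $X$: if $X$ is nonsingular, $J$ is an elliptic curve; if $X$ has one node, $J$ is a one-dimensional torus or twisted torus (depending on whether the tangents at the node are real or conjugate); if $X$ has two or three nodes, or is reducible, $J$ is a product of such tori; and if $X$ has a cusp or worse, $J$ contains a copy of $\G_a$. One first observes that a definite class is in particular a $2$-torsion class that dies in $A_X$; by \ref{sigmasurj} the map $\sigma$ is surjective, so $\ker(\sigma)$ has index $|A_X| = 2^{|C(X)|-1}$ in ${}_2\Pic(X)$, and a nonzero definite class exists precisely when $|{}_2\Pic(X)| > 2^{|C(X)|-1}$. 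So the whole proposition becomes a matter of computing $|{}_2\Pic(X)|$ and comparing it with the number of connected components of $X(\R)$.

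\emph{The computation in each case.} Using the exact sequence $0\to J(\R)\to\Pic(X)\to\Z^m\to0$ and the snake lemma, ${}_2\Pic(X)$ fits between ${}_2J(\R)$ and ${}_2(\Z^m) = 0$, with a coboundary into $J(\R)/2J(\R)$; concretely I expect ${}_2\Pic(X) = {}_2J(\R) \oplus (\text{image in }{}_2\Z^m)$ to collapse to ${}_2J(\R)$ since $\Z^m$ is torsion-free. Then: for $X$ smooth, ${}_2J(\R) = (\Z/2)^2$ if $X(\R)$ has two components (so $|A_X| = 2$, one nonzero definite class) and ${}_2J(\R) = \Z/2$ if $X(\R)$ is connected (so $|A_X| = 1$ — but wait, this predicts one nonzero definite class in the connected case too). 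Here I must be careful: the intended statement presumably restricts to genuinely \emph{singular} cubics, or the reducedness/density hypotheses interact with the smooth case via the classical fact (used in \cite{rz07}, Hilbert's construction) that a smooth cubic's relevant class is not half of $\scrO_X(6)$ in the right way — in any event the key point I would nail down is the nodal case. For $X$ with exactly $\delta$ nodes all with real tangents, $J(\R) \cong (\R^*)^\delta \times (\text{abelian variety part, if } X \text{ has a smooth cubic as partial normalization})$; a split torus $\G_m$ contributes $\Z/2$ to $2$-torsion and one component to $X(\R)$ locally, and one checks the count of components of $X(\R)$ against $|{}_2J(\R)|$ comes out with exactly a factor of $2$ to spare, giving the unique nonzero definite class. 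For an acnode ($\G_m$ twisted): $J(\R)$ for the twisted torus is compact, ${}_2J(\R) = \Z/2$, but the acnode contributes an isolated real point hence an extra component, so $|A_X|$ absorbs all of ${}_2\Pic(X)$ and only $0$ is definite. For a cusp, $\G_a(\R) = \R$ is $2$-divisible and torsion-free, contributing nothing to ${}_2\Pic(X)$ but also nothing new to components, and again one finds $\sigma$ injective. Reducible cases (conic + line, three lines) are handled by the same bookkeeping on $\Z^m$ and on the tori glued along the intersection points.

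\emph{The main obstacle, and uniqueness.} The delicate part is not the raw count of $2$-torsion but showing that the definite class, when it exists, is \emph{exactly one} nonzero class rather than, say, all of $\ker\sigma$ being larger than expected — equivalently, that $\sigma$ restricted to ${}_2\Pic(X)$ has kernel of order exactly $2$ in the good cases. For this I would argue that $\psi$ in \ref{psisurj} is injective on the image of ${}_2J(\R)$ coming from the torus factors (the sign tuple $\epsilon$ associated to $\div(g) = 2D$ genuinely detects the class), invoking the nonsingular-curve bijectivity remark after \ref{psisurj} applied to the normalization and tracking what the gluing does. Concretely: lift $\tau$ to the normalization $\widetilde X$ (a smooth cubic, or $\P^1$, or a disjoint union), where $\psi$ is bijective, and check that a nonzero $2$-torsion class on $\widetilde X$ that is definite upstairs descends to $X$ — this pins down the class. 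The one genuinely new geometric input is matching the node's two real branches to the same or opposite sign, which is where the hypothesis "real node with real tangent" versus "acnode" enters decisively; I expect that verification (a local computation with $g = $ a square times a unit near the node) to be the crux of the argument.
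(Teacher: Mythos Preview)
Your overall strategy---compute ${}_2J(\R)$ case by case, compare to $|A_X|$, and use the surjectivity of $\sigma$ from Corollary~\ref{sigmasurj}---is exactly what the paper does. But the execution has real errors.

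First, your worry about the smooth case is a misreading of the statement. A nonsingular cubic has no singularities, so ``every singularity of $X$ is a node'' holds vacuously, and the proposition does claim a unique nonzero definite class in that case. Your own computation (${}_2J(\R)\cong(\Z/2)^2$ with $|A_X|=2$ when $X(\R)$ has two components; ${}_2J(\R)\cong\Z/2$ with $|A_X|=1$ when $X(\R)$ is connected) gives $|\ker\sigma|=2$ either way, which is precisely what is asserted. The paper handles this by citing \cite{sch:tams}; there is nothing to fix.

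Second, your formula $J(\R)\cong(\R^*)^\delta\times(\text{abelian variety part})$ for a cubic with $\delta$ real nodes is wrong. A plane cubic has arithmetic genus~$1$, so its generalized Jacobian is always one-dimensional. An irreducible nodal cubic has $J\cong\G_m$; a triangle (three nodes) still has $J\cong\G_m$; a conic plus a transversal line has $J\cong\G_m$. There is never an abelian-variety factor once $X$ is singular, since the normalization has genus~$0$. The paper simply lists $J(\R)$ in each case (it is $\R$, $\R^*$, or $\C^*/\R^*$) and reads off ${}_2J(\R)$ and $|A_X|$; the comparison is then a one-line count, with no further argument needed.

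Third, and relatedly, your proposed uniqueness argument via the normalization cannot work as stated: the normalization $\widetilde X$ of any singular plane cubic is $\P^1$ (or a disjoint union of $\P^1$'s), so $\Pic_0(\widetilde X)$ is trivial and there is no nonzero $2$-torsion upstairs to lift. Uniqueness falls out immediately from the count $|\ker\sigma|=|{}_2J(\R)|/|A_X|$, which equals $2$ exactly in the all-nodes cases and $1$ otherwise; no extra geometric input is required.
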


\begin{proof}
When $X$ is nonsingular, the assertion says $|\ker(\sigma)|=2$ and is
a particular case of \cite{sch:tams}, Lemma 2.3(c). (The map $\sigma$
considered here is identified with the map $\bar\phi$ studied in
\emph{loc.\,cit.}, via the natural isomorphism $H^1(\R,J)\isoto A_X$
observed in \cite{sch:tams} Remark~2.2.)

For singular $X$ the claim is proved by computing the group $J(\R)=
\Pic_0(X)$ and using \ref{sigmasurj}. In more detail (compare
\cite{blr} 9.2 for computation of the Picard groups):

When $X$ is irreducible, $\Pic_0(X)=J(\R)$ is isomorphic to $\R$,
$\R^*$ or $\C^*/\R^*$, depending on whether $X$ has a cusp, a node
or an acnode, respectively. Note that $|A_X|=1$ in the first two
cases and $|A_X|=2$ in the third.
When $X$ is the union of a conic $Q$ and a line $L$, we have
$\Pic_0(X)\cong\R^*$, $\R$ or $\C^*/\R^*$, depending on whether
$|Q(\R)\cap L(\R)|=2$, $1$ or $0$. Again we have $|A_X|=1$ in the
first two cases and $|A_X|=2$ in the last.
When $X$ is the union of three lines, $\Pic_0(X)\cong\R$ or $\R^*$,
according to whether the lines meet in a common point or not, and
$|A_X|=1$ in either case. Since $\sigma$ is surjective by Corollary
\ref{sigmasurj}, this implies the assertion in each case.
\end{proof}

\begin{notat}\Label{dfntaux}%
Let $X$ be a reduced plane cubic with $X(\R)$ Zariski dense in $X$
whose only singularities are nodes, and let $\tau$ be the unique
definite nonzero $2$-torsion class in $\Pic(X)$ (see Proposition
\ref{def2torscubic}). We write
$$\tau_X\>:=\>\tau+[\scrO_X(3)]\>\in\Pic(X).$$
\end{notat}

%-------------------------------------------------------------------%

\section{Nine real zeros}

Let $S\subset\P^2(\R)$ be any set of nine points. There is at least
one cubic $X$ passing through $S$, i.e., $\dim I_3(S)\ge1$. If there
exists a psd and non-sos sextic through $S$, we show in this section
that $X$ is unique, and that $S$ gives rise to the divisor class
$\tau_X$ on $X$ (see \ref{dfntaux}). In the next section we will
prove the converse.

\begin{lab}
For $T\subset\P^2(\R)$ a finite set, recall that $I_d(mT)\subset
\R[x_0,x_1,x_2]$ is the space of forms of degree $d$ that have
multiplicity $\ge m$ at every point of $T$. The psd forms of
degree~$2d$ that vanish in $T$ form a face of the cone $P_{2d}$
denoted
$$P_{2d}(T)\>=\>\{p\in P_{2d}\colon p|_T=0\},$$
and we put $\Sigma_{2d}(T)=\Sigma_{2d}\cap P_{2d}(T)$. The forms in
$\Sigma_{2d}(T)$ are the sums of squares of forms in $I_d(T)$.
\end{lab}

The following facts are certainly well known, and are recorded for
reference:

\begin{prop}\Label{rappel}%
Let $T\subset\P^2(\R)$ be any set of 8 points, no 4 on a line and no
7 on a conic.
\begin{itemize}
\item[(a)]
$\dim I_3(T)=2$, and almost every cubic in $I_3(T)$ is nonsingular.
\item[(b)]
$\dim I_6(2T)=4$ and $\dim I_3(T)^2=3$.
\item[(c)]
Every psd sextic in $I_3(T)^2$ is a sum of two squares of cubics. In
particular, $\Sigma_6(T)=I_3(T)^2\cap P_6(T)$.
\end{itemize}
\end{prop}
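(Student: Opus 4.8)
The plan is to treat the three assertions as essentially a single linear-algebra computation on the plane, using the classical theory of linear systems through points in general position, and then to invoke the Cayley--Bacharach property for part (c).

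First I would handle (a). The space of all ternary cubics has dimension $10$; imposing passage through a point is one linear condition, so a priori $\dim I_3(T)\ge 10-8=2$. For the reverse inequality I would argue that $8$ points with no $4$ collinear and no $7$ on a conic impose independent conditions on cubics. The cleanest route is the standard one: if the conditions were dependent, then by the Cayley--Bacharach-type argument (or by a direct case analysis) some subset of the points would fail to impose independent conditions on curves of lower degree, forcing $4$ of them onto a line or $7$ onto a conic, contrary to hypothesis. Hence $\dim I_3(T)=2$. That almost every member of this pencil is nonsingular is Bertini's theorem applied to the pencil: the base locus is exactly $T$ (a pencil of cubics with $9$ base points counted with multiplicity, but here the $9$th point is determined and the base scheme is reduced since no $4$ are collinear), so the generic member is smooth away from the base points, and a local computation at each of the $8$ assigned points — where the pencil already separates tangent directions because no $4$ points are collinear — shows the generic member is smooth there too. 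I expect this nonsingularity claim, though standard, to be the most delicate point to state carefully.

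Next, for (b): imposing multiplicity $\ge 2$ at a point is $3$ linear conditions, so $\dim I_6(2T)\ge 28-3\cdot 8=4$. For the reverse inequality I would again invoke that the $8$ double points impose independent conditions on sextics — this is a known general-position fact for $\le 8$ general points, and under the stated hypotheses (no $4$ on a line, no $7$ on a conic) it holds; alternatively one can deduce $\dim I_6(2T)=4$ from (a) via the exact sequence relating $I_6(2T)$ to the square of the pencil $I_3(T)$ once we know $\dim I_3(T)^2=3$. For $\dim I_3(T)^2=3$: if $I_3(T)=\langle f,g\rangle$, then $I_3(T)^2$ is spanned by $f^2, fg, g^2$, so its dimension is $\le 3$, and it equals $3$ unless $f^2,fg,g^2$ are linearly dependent, which would force $f,g$ to be proportional — impossible since $\dim I_3(T)=2$. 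So $\dim I_3(T)^2=3$, and then $\dim I_6(2T)=4$ follows because $I_3(T)^2\subset I_6(2T)$ with the quotient accounted for by the single extra sextic not of the form $(\text{cubic})^2+\cdots$; here I would pin down the dimension by the independent-conditions count rather than leave a gap.

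Finally, for (c): let $p\in I_3(T)^2\cap P_6$ be psd. Since $p$ lies in the net spanned by $f^2,fg,g^2$, write $p=af^2+bfg+cg^2$ with $(a,b,c)\in\R^3$; viewing this as a binary quadratic form in $(f,g)$, psd-ness of $p$ on $\P^2(\R)$ — in particular at the many real points where the pair $(f(x),g(x))$ runs over all directions, which happens because the base locus of the pencil is only $8+1$ points — forces $a\ge 0$, $c\ge 0$ and $b^2\le 4ac$, i.e. the quadratic form $at^2+bt+c$ is psd. A psd real binary quadratic form is a sum of two squares of linear forms, $at^2+bts+cs^2=(\alpha t+\beta s)^2+(\gamma t+\delta s)^2$, and substituting $t=f$, $s=g$ exhibits $p=(\alpha f+\beta g)^2+(\gamma f+\delta g)^2$ as a sum of two squares of cubics in $I_3(T)$. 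This proves the first clause of (c); the displayed identity $\Sigma_6(T)=I_3(T)^2\cap P_6(T)$ is then immediate, since the inclusion $\subseteq$ is clear (a square of a cubic through $T$ is a psd sextic in $I_3(T)^2$ vanishing on $T$) and the reverse is what we just showed, noting $p\in P_6$ and $p|_T=0$ together say $p\in P_6(T)$. The one point requiring care is the claim that $(f(x):g(x))$ attains every value in $\P^1(\R)$ as $x$ ranges over $\P^2(\R)$; this holds because the rational map $\P^2\dashrightarrow\P^1$ defined by $(f:g)$ is dominant (its fibers are the cubics of the pencil, which are one-dimensional), so its image contains a Zariski-dense, hence in particular a Euclidean-dense-enough, subset of $\P^1(\R)$ to force the coefficient inequalities by continuity.

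The main obstacle I anticipate is not any single hard step but rather assembling the general-position input cleanly: one must be sure that "no $4$ on a line, no $7$ on a conic" is exactly the hypothesis needed to force independence of conditions in all three of (a), (b), and the nonsingularity claim, and to guarantee the base locus of the cubic pencil is reduced of the expected size. I would lean on the classical statement that $\le 8$ points impose independent conditions on cubics iff no $4$ are collinear (and on sextics-with-double-points under the analogous mild hypothesis), citing it rather than reproving it, and spend the written proof making the reductions to that statement explicit.
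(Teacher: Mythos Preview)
Your proposal is essentially correct and, for (a) and (c), proceeds much as the paper does. The genuine difference is in the upper bound $\dim I_6(2T)\le 4$ in (b). You plan to cite that eight double points in this position impose independent conditions on sextics, whereas the paper gives a short self-contained argument: choose $f\in I_3(T)$ with $X=V(f)$ nonsingular (using (a)), let $M$ be the ninth base point of the pencil, and observe that any $g\in I_6(2T)$ prime to $f$ with $\div_X(g)\ge 2\sum_{P\in T}P+M$ must satisfy $\div_X(g)=2\bigl(\sum_{P\in T}P+M\bigr)$ and hence lie in $I_3(T)^2$; this shows $I_3(T)^2$ has codimension $\le 1$ in $I_6(2T)$. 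The paper's route avoids having to locate a reference that matches the precise hypotheses ``no $4$ on a line, no $7$ on a conic'' for double-point conditions---such a clean citation is not as readily available as the simple-point analogue you use for (a), so your plan risks a soft gap there. For (c) the paper takes the local version of your argument: at a transversal real intersection point of $f$ and $f'$ the pair $(f,f')$ gives local affine coordinates, so psd-ness of $q(f,f')$ near that point forces the binary quadratic form $q$ to be psd. Your global surjectivity/density argument for the map $(f:f')\colon\P^2\dashrightarrow\P^1$ reaches the same conclusion and is equally valid; the local version is just quicker to write down.
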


Here and in the sequel, $I_3(T)^2$ denotes the space of sextic forms
spanned by the products $f_1f_2$ with $f_1,\,f_2\in I_3(T)$.

\begin{proof}
(a) is classical, see e.g.\ \cite{ha} V.4.4. (b) Let $I_3(T)=
\spn(f,f')$ with $f$ nonsingular. Then $I_3(T)^2=\spn(f^2,ff',f'^2)$
has dimension~$3$ since $f,\,f'$ are algebraically independent.
Being singular in the points of $T$ imposes $8\cdot3=24$ linear
conditions on a sextic, showing $\dim I_6(2T)\ge4$. For the reverse
inequality consider the divisor $D=\sum_{P\in T}P$ on $X=V(f)$, and
let $M\in X(\R)$ be the point with $\div_X(f')=D+M$. If $g\in
I_6(2T)$ is prime to $f$ and satisfies $\div_X(g)\ge2D+M$ then
$\div_X(g)=2(D+M)$, implying that $g\in I_3(T)^2$. This shows that
$I_3(T)^2$ has codimension $\le1$ in $I_6(2T)$.
(c) Let $g\in I_3(T)^2$ be a psd sextic form. Then $g=q(f,f')$ is a
quadratic form in $f$ and $f'$, and the binary quadratic form
$q(t_0,t_1)$ is necessarily positive definite, as can be seen locally
around any transversal intersection point of $f$ and $f'$.
Hence $q$ is a sum of two squares from $I_3(S)$.
\end{proof}

\begin{lab}\Label{assmpts}%
Let $g\in\R[x_0,x_1,x_2]=\R[\x]$ be a form of degree~$6$ that is psd.
If $g$ is reducible over $\R$ then $g$ is sos, since every ternary
psd form of degree $\le4$ is sos (Hilbert \cite{hi88}).
Moreover $|V_\R(g)|\le5$ or $|V_\R(g)|=\infty$ in this case.
Now assume that $g$ is irreducible over~$\R$.
Since every real zero of $g$ is a singular point of the curve $V(g)$,
there can be at most 10 real zeros (and at most~$9$ when $g$ is
reducible over~$\C$).
We assume that $g$ has at least 9 real zeros, and we fix a set
$S\subset V_\R(g)$ with $|S|=9$.
\end{lab}

\begin{lem}\Label{anycubic}%
Let $g\in\R[\x]$ be a psd sextic form with $|V_\R(g)|<\infty$, and
let $S\subset V_\R(g)$ with $|S|=9$. Any cubic through $S$ is reduced
with Zariski dense $\R$-points, and is nonsingular in the points
of~$S$. Moreover,
\begin{itemize}
\item[(a)]
if $\dim I_3(S)=2$ then $g$ is a sum of two squares of cubic forms,
hence reducible over~$\C$,
\item[(b)]
otherwise $\dim I_3(S)=1$, and $g$ is absolutely irreducible and not
a sum of squares in $\R[\x]$.
\end{itemize}
\end{lem}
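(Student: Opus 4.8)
The plan is to work with a fixed nonsingular cubic, exploiting the rational map it induces, and to push all the needed positivity through a degree argument on divisors. First I would address the preliminary assertions: a cubic $X$ through $S$ cannot have $S$ (or any of its subsets) at a singular point, because $g$ is psd and every point of $V_\R(g)$ is a double point of $V(g)$, so if $X$ were singular at $P\in S$ the local intersection number $i_P(X,V(g))$ would be at least $2\cdot 2=4$; summing over the nine points of $S$ forces $\sum_{P\in S}i_P(X,V(g))\ge\,$ (something exceeding $18=3\cdot6$), contradicting Bézout. The same Bézout bookkeeping, now counting $i_P(X,V(g))\ge 2$ at each of the nine points, shows $X$ meets $V(g)$ in at least $18$ points with multiplicity, which is exactly $3\cdot 6$, so $X$ and $V(g)$ share no common component (in particular $X$ is not a component of the irreducible sextic) and $X$ is reduced; reducedness plus passing through nine points of $\P^2(\R)$ forces $X(\R)$ Zariski dense (a non-reduced or non-$\R$-dense cubic sits inside a line union a conjugate pair of lines, or similar, and cannot carry nine real points in the required configuration — this needs a short case check).

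For part (a), suppose $\dim I_3(S)=2$, say $I_3(S)=\spn(f,f')$. I would first argue that almost every cubic in the pencil is nonsingular (as in \ref{rappel}(a), which applies once one checks the "no 4 on a line, no 7 on a conic" hypothesis is forced here — if 4 points of $S$ lay on a line $L$, then $L^2$ times a conic would be forced into $I_6(2S)$ in a way incompatible with $g$ irreducible, and similarly for 7 on a conic). Fix a nonsingular $X=V(f)$ in the pencil. Then on $X$ the divisor $D=\sum_{P\in S}P$ has degree $9$, and $\div_X(f')=D+M$ for a unique real point $M$. The form $g$ is a psd sextic vanishing doubly at each point of $S$, i.e.\ $g\in I_6(2S)$; restricting to $X$, $\div_X(g)\ge 2D$, a divisor of degree $18=\deg\scrO_X(6)$, so $\div_X(g)=2D$ exactly and $g|_X$ has no zero outside $S$. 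Now $g$ restricted to $X$ is a section of $\scrO_X(6)$ with divisor $2D$; comparing with $f'^2$, whose divisor is $2D+2M$, I would produce from $g$ and $f'^2$ (after adjusting by $f^2$, whose restriction to $X$ is zero — so this needs care: work in $\R[\x]/(f)$ and use that $g,f'^2$ become proportional there up to a unit, hence $g=q(f,f')$ for a binary quadratic $q$, exactly as in \ref{rappel}(c)). Positivity of $g$ near a transversal point of $f\cap f'$ forces $q$ positive definite, so $g$ is a sum of two squares of cubics, in particular reducible over $\C$.

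For part (b), if $\dim I_3(S)\ne 2$ then $\dim I_3(S)=1$ (it is $\ge 1$ always, and if it were $\ge 3$ then $S$ would impose too few conditions — but actually I would rule out $\ge 3$ by noting nine general-position points impose independent conditions on cubics, and the degenerate configurations are excluded as above by irreducibility of $g$). With $\dim I_3(S)=1$ there is a unique cubic $X$ through $S$. If $g$ were reducible over $\C$, then by \ref{assmpts} $g$ has at most $9$ real zeros and, being psd and $\C$-reducible, $g=h\bar h$ for a cubic $h$ over $\C$, whence $g$ is a sum of two squares of real cubics $f,f'\in I_3(S)$ — but then $\dim I_3(S)\ge 2$, a contradiction. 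Hence $g$ is absolutely irreducible. Finally, if $g$ were sos, $g=\sum h_i^2$ with $h_i\in I_3(S)$; since $\dim I_3(S)=1$ all $h_i$ are proportional to a single cubic $f$, giving $g=cf^2$ reducible, contradiction again. So $g$ is not sos.

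I expect the main obstacle to be the reducedness and $\R$-density claim for an arbitrary cubic through $S$: the Bézout argument cleanly forbids $X$ from sharing a component with $V(g)$ and from being singular on $S$, but squeezing out \emph{reducedness} of $X$ itself, and then Zariski density of $X(\R)$, requires enumerating the degenerate cubic types (triple line, line plus double line, conic plus tangent line, three concurrent lines, a line plus a conjugate pair, etc.) and checking in each that carrying nine real points which are double zeros of an irreducible psd sextic is impossible — this is where the "no 4 on a line / no 7 on a conic" constraints really get used.
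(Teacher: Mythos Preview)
Your proposal is correct and follows essentially the same route as the paper's: irreducibility of $g$ forces ``no 4 on a line, no 7 on a conic'', whence $\dim I_3(S)\in\{1,2\}$ via \ref{rappel}(a); B\'ezout gives nonsingularity at $S$; for (a) one writes $g$ as a binary quadratic in $f,f'$ and invokes \ref{rappel}(c); for (b) one notes that any sos sextic through $S$ lies in $\R f^2$. Your anticipated obstacle (reducedness and $\R$-density of $X$) dissolves once the ``no 4 on a line, no 7 on a conic'' constraint is in hand---a non-reduced cubic, or one with a component having finitely many real points, has its real locus contained in at most two lines (or a line plus a point) and so cannot carry nine points with at most three per line; the paper simply records this step as ``it is clear''.
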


\begin{proof}
$g$ is irreducible, so $S$ contains no $4$ points on a line and no
$7$ points on a conic.
By \ref{rappel}(a) we have $\dim I_3(S)\in\{1,2\}$.

Let $X=V(f)$ be a cubic through $S$. It is clear that $X$ is reduced
and $X(\R)$ is Zariski dense in $X$.
Moreover $S$ cannot contain a singular point of $X$, since otherwise
the intersection product of $X$ and $Y=V(g)$ would satisfy $X\mal Y>
2\cdot9=18$, contradicting that $Y$ is irreduzible.

If $\dim I_3(S)=2$ and $I_3(S)$ is spanned by $f$ and $f'$, then
$f,\,f'$ are relatively prime and $S$ is the set of their common
zeros.
It follows that $g$ is a quadratic form in $f,\,f'$, and by
\ref{rappel}(c), $g$ is a sum of two squares from $I_3(S)$.

On the other hand, when $X=V(f)$ is the unique cubic through $S$,
the only sos sextics vanishing in $S$ are multiples of $f^2$. So $g$
is not a sum of squares in this case, and is therefore irreducible
over~$\C$.
\end{proof}

\begin{lab}\Label{intfall}%
Assume now that the psd sextic $g$ is not a sum of squares and has at
least 9 real zeros.
Fix a subset $S\subset V_\R(g)$ with $|S|=9$. By \ref{anycubic} there
is a unique cubic $X=V(f)$ through $S$, and $X$ is reduced with
$X(\R)$ Zariski dense in $X$ and $S\subset X_\reg(\R)$. However, $X$
may be singular or even reducible. The Weil divisor
$D=\sum_{P\in S}P$ on $X_\reg$ defines a class in $\Pic(X)$. Let
$L\in\Div(X)$ be the divisor of a line section of $X$.
\end{lab}

\begin{prop}\Label{2torsne0}%
The divisor class $[D-3L]$ in $\Pic(X)$ is a nonzero $2$-torsion
class and is definite (\ref{dfnpsi}).
\end{prop}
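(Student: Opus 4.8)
The plan is to produce an explicit rational function on $X$ witnessing that $2(D-3L)\sim0$ and that the resulting square class is positive on $X(\R)$; the nonzero claim is handled separately using the structure of psd non-sos sextics.

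First I would establish $2$-torsion. Since $g$ is a psd sextic and $X=V(f)$ is the unique cubic through $S$, the curve $Y=V(g)$ meets $X$ only in the nine points of $S$ (any further intersection point forces $X\mal Y>18$ as in \ref{anycubic}), and at each $P\in S$ the local intersection number is even because $g$, being psd, vanishes to even order along any branch of $X$ through $P$. As $g$ has degree $6$ and $X$ degree $3$, Bézout gives $X\mal Y=18=\sum_{P\in S}i_P(f,g)$ with each $i_P(f,g)\ge2$, hence $i_P(f,g)=2$ for every $P\in S$. Restricting $g$ to $X$ therefore gives $\div_X(g/\ell^6)=2D-6L=2(D-3L)$ for a linear form $\ell$, so $[D-3L]\in{}_2\Pic(X)$. (Here one uses that $X$ is reduced and $S\subset X_\reg(\R)$, so the restriction of $g$ is a well-defined nonzero rational function on each component, legitimate by \ref{anycubic}.)

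Next, definiteness. By the characterization recorded after \ref{dfndef}, the class $[D-3L]$ is definite provided there is a rational function $h\in\R(X)^*$, invertible around $X_\sing$, with $\div_X(h)=2(D-3L)$ and $h\ge0$ on $X(\R)$. Take $h=g/\ell^6$ with $\ell$ a real linear form not vanishing at any point of $X_\sing(\R)$: this is invertible around $X_\sing$, has the right divisor by the previous paragraph, and satisfies $h\ge0$ on $X(\R)$ since $g$ is psd and $\ell^6\ge0$. Hence $\sigma([D-3L])=0$, i.e.\ the class is definite in the sense of \ref{dfndef}.

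Finally, nonvanishing. Suppose $[D-3L]=0$, i.e.\ $D\sim3L$ on $X$. Then $D$ is cut out on $X$ by a cubic, so there is a cubic form $f'$ with $\div_X(f')=D$; since $f'$ vanishes on $S$ we get $\dim I_3(S)\ge2$, and $f,f'$ span $I_3(S)$ as they are coprime (their common zero locus is exactly $S$, nine points, so no common component). By \ref{anycubic}(a) this forces $g$ to be a sum of two squares of cubics, contradicting the assumption that $g$ is not sos. Therefore $[D-3L]\ne0$. The only subtle point is the local even-order argument at the nodes or worse singularities that may lie on $X$ — but these are disjoint from $S$, so at each $P\in S$ the curve $X$ is smooth and $g|_X$ vanishes to order $i_P(f,g)$ which is even precisely because $g\ge0$ near $P$; this is the step that carries the real-geometric content, and I expect it to be the place where care is needed, though it is short given \ref{anycubic}.
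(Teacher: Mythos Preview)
Your proof is correct and follows essentially the same route as the paper's. The paper's argument is terser: it simply asserts $\div_X(g)=2D$ (relying on the B\'ezout count already implicit in \ref{anycubic}), observes that $g/l^6$ is a psd rational function on $X$ with divisor $2(D-3L)$ to get definiteness, and for nonvanishing contradicts $I_3(S)=\R f$ directly rather than going through \ref{anycubic}(a) to conclude $g$ would be sos---but these are the same contradiction viewed from the two halves of Lemma~\ref{anycubic}. Your added detail (the parity of $i_P(f,g)$ from nonnegativity along the smooth branch, and the explicit check that $g/\ell^6$ is invertible near $X_\sing$) is sound and makes explicit what the paper leaves to the reader.
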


So $[D]=\tau_X$, see Notation \ref{dfntaux}.

\begin{proof}
From $\div_X(g)\sim6L$ and $\div_X(g)=2D$
it follows that $2(D-3L)\sim0$ on $X$. Assuming $D\sim3L$ there would
exist a cubic form $f'$, prime to $f$, with $\div_X(f')=D$,
contradicting $I_3(S)=\R f$ (\ref{anycubic}). Since $2(D-3L)$ is the
divisor of the psd rational function $g/l^6$ on $X$, it is clear that
$\tau$ is definite, see Definition \ref{dfndef}.
\end{proof}

For a convenient way of speaking we introduce the following
terminology:

\begin{dfn}\Label{dfnadm}%
A set $S\subset\P^2(\R)$ will be said to be \emph{admissible} if
$|S|=9$, and if there exists a reduced plane cubic $X$ with $S\subset
X_\reg(\R)$ for which $\sum_{P\in S}[P]=\tau_X$ in $\Pic(X)$.
\end{dfn}

We have therefore proved:

\begin{cor}\Label{zsfgpsdsex}%
Let $g$ be a psd sextic that is not a sum of squares. Then any set
$S\subset V_\R(g)$ with $|S|=9$ is admissible.
\qed
\end{cor}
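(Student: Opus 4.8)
The plan is simply to collect the results already established in this section; this is why the statement is closed with \qed. First I would invoke \ref{assmpts}: since $g$ is psd but not a sum of squares, $g$ cannot be reducible over $\R$ (a reducible psd sextic is sos), so $g$ is irreducible over $\R$, every real zero is a singular point of $V(g)$, and hence $|V_\R(g)|\le 10$; in particular $V_\R(g)$ is finite and, since $|S|=9$, has cardinality $9$ or $10$. Thus the hypotheses of \ref{anycubic} are met for $S$, and because $g$ is not sos we are in case (b): $\dim I_3(S)=1$, so there is a unique cubic $X=V(f)$ through $S$, $X$ is reduced, $X(\R)$ is Zariski dense in $X$, and $S\subset X_\reg(\R)$ (as $S$ contains no singular point of $X$). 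In particular the setup of \ref{intfall} is in force.

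Next I would appeal to Proposition \ref{2torsne0}. Writing $D=\sum_{P\in S}P$ as a Weil divisor on $X_\reg$ and letting $L$ be the divisor of a line section of $X$, that proposition gives that
$$\tau\>:=\>[D-3L]\>\in\>{}_2\Pic(X)$$
is nonzero and definite in the sense of \ref{dfndef}, the definiteness coming from the fact that $2(D-3L)$ is the divisor of the psd rational function $g/\ell^6$ on $X$.

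The key structural input is then Proposition \ref{def2torscubic}: a reduced plane cubic with Zariski dense real points which carries \emph{some} nonzero definite $2$-torsion class must have only nodes as singularities, and for such a cubic that class is \emph{unique}. Applied to $X$ and $\tau$, this shows first that every singularity of $X$ is a node, so that $\tau_X$ is defined (Notation \ref{dfntaux}), and second that $\tau$ coincides with the class denoted $\tau$ there. Hence
$$\tau_X\>=\>\tau+[\scrO_X(3)]\>=\>[D-3L]+[3L]\>=\>[D]\>=\>\sum_{P\in S}[P]$$
in $\Pic(X)$. Together with $|S|=9$ and $S\subset X_\reg(\R)$, this is precisely the condition of Definition \ref{dfnadm}, so $S$ is admissible.

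There is no genuine obstacle here, as the statement is a recapitulation of \ref{assmpts}, \ref{anycubic}, \ref{2torsne0} and \ref{def2torscubic}. The only point requiring a word of care is that $\tau_X$ is a priori defined only when $X$ has no worse singularities than nodes; this is not an extra assumption but a consequence, obtained from the contrapositive of the second assertion of \ref{def2torscubic} applied to the nonzero definite class furnished by \ref{2torsne0}.
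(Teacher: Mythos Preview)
Your proposal is correct and follows exactly the route the paper intends: the corollary is a direct recapitulation of \ref{assmpts}, \ref{anycubic}(b), and \ref{2torsne0}, combined with \ref{def2torscubic} to ensure that $\tau_X$ is defined and equals $[D-3L]$, which is precisely the observation ``So $[D]=\tau_X$'' recorded after \ref{2torsne0}. There is nothing to add.
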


Next we discuss properties of admissible sets.

\begin{prop}\Label{propsadm}%
Let $S\subset\P^2(\R)$ be an admissible set, let $f\ne0$ be a cubic
form vanishing on $S$, and let $X=V(f)$.
\begin{itemize}
\item[(a)]
$I_3(S)=\R f$, so $X$ is uniquely determined by $S$.
\item[(b)]
$X$ is reduced, $S\subset X_\reg(\R)$, and $X(\R)$ is Zariski dense
in $X$.
\item[(c)]
For every irreducible component $X'$ of $X$ we have $|S\cap X'(\R)|
=3d'$ where $d'=\deg(X')$.
\end{itemize}
In view of (a), we call $X=V(f)$ the cubic \emph{associated} to~$S$.
\end{prop}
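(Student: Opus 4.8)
The plan is to unwind the definition of admissibility and use the divisor-theoretic facts established in Section~2 and in Proposition~\ref{2torsne0} and its surroundings. By Definition~\ref{dfnadm} there is a reduced plane cubic $X_0$ with $S\subset (X_0)_\reg(\R)$ and $\sum_{P\in S}[P]=\tau_{X_0}$ in $\Pic(X_0)$; in particular, $\tau_{X_0}$ exists, so by Proposition~\ref{def2torscubic} the only singularities of $X_0$ are nodes, and $X_0(\R)$ is Zariski dense in $X_0$. The heart of part~(a) is to show that \emph{any} cubic form $f$ vanishing on $S$ is a scalar multiple of a fixed $f_0$ with $X_0=V(f_0)$, i.e.\ $\dim I_3(S)=1$.

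For (a), I would argue by contradiction. Suppose $\dim I_3(S)\ge2$. Then $S$ imposes independent enough conditions that, by the usual Cayley--Bacharach / linear-system reasoning (as recalled in Proposition~\ref{rappel}(a), which applies after checking that no $4$ of the $9$ points lie on a line and no $7$ on a conic --- these containment restrictions follow from part~(c), which I prove first, or directly from $S\subset (X_0)_\reg(\R)$ together with B\'ezout), a generic member of $I_3(S)$ is a \emph{nonsingular} cubic $X_1$ through $S$. On $X_1$ the divisor $D=\sum_{P\in S}P$ would then be cut out by a cubic section, so $D\sim 3L_1$, i.e.\ $\sum_{P\in S}[P]=[\scrO_{X_1}(3)]$. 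But on $X_0$ we have $\sum_{P\in S}[P]=\tau_{X_0}=\tau+[\scrO_{X_0}(3)]$ with $\tau$ the nonzero definite $2$-torsion class; comparing the two descriptions via the fact that $S$ lies on both cubics (hence, by B\'ezout applied to $X_0\cap X_1$, $S$ is exactly a complete intersection of degrees $3,3$ when $\dim I_3(S)=2$) forces $\tau=0$ on $X_0$, contradicting $\tau\ne0$. The cleanest way to set this up is: if $\dim I_3(S)=2$ then $I_3(S)=\spn(f_0,f_1)$ with $f_0,f_1$ coprime, $S$ is their $9$ common points, and then on $X_0=V(f_0)$ the class $\sum_{P\in S}[P]$ equals the class cut by $f_1$, which is $[\scrO_{X_0}(3)]$; so $\tau_{X_0}=[\scrO_{X_0}(3)]$, i.e.\ $\tau=0$, a contradiction. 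Hence $\dim I_3(S)=1$, proving (a) and the final sentence.

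For (b), once (a) gives $X=X_0$ (up to scalar), all three assertions are immediate: $X$ is reduced and $X(\R)$ is Zariski dense because these were part of the hypotheses packaged into the existence of $\tau_X$ (Notation~\ref{dfntaux} and Proposition~\ref{def2torscubic} require a reduced cubic with Zariski-dense real points whose singularities are nodes), and $S\subset X_\reg(\R)$ is exactly the containment in Definition~\ref{dfnadm}. For (c), write $X=X'\cup X''$ with $X'$ of degree $d'$ and $X''$ of degree $3-d'$ (if $X$ is irreducible there is nothing to prove). The points of $S$ on $X'$ lie in $X'_\reg(\R)$, in particular not on $X'\cap X''$, and the nonzero definite $2$-torsion class $\tau$ and the degree-$3$ class on a reducible nodal cubic have prescribed partial degrees: $[\scrO_X(3)]$ has partial degree $3d'$ on $X'$, and $\tau$, being $2$-torsion in $\Pic(X)$ which surjects onto $\Z^m$ with torsion-free quotient (the partial-degree sequence $0\to J(\R)\to\Pic(X)\to\Z^m\to0$ of Section~2), must have partial degree $0$ on every component. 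Therefore $\tau_X=\tau+[\scrO_X(3)]$ has partial degree $3d'$ on $X'$; since $\sum_{P\in S}[P]=\tau_X$ and the left side has partial degree $|S\cap X'(\R)|$ on $X'$, we get $|S\cap X'(\R)|=3d'$.

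The main obstacle is part~(a): I need to rule out $\dim I_3(S)\ge2$ cleanly, and the subtlety is that when $\dim I_3(S)=2$ the set $S$ is automatically a complete intersection of two cubics, so I must be careful that the ``other'' cubic through $S$ can be taken coprime to $f_0$ and that the resulting linear-equivalence computation on $X_0$ is valid even though $X_0$ may be singular or reducible --- this is where I lean on the Section~2 machinery (representing divisor classes by Weil divisors on $X_\reg$, and the exactness of $0\to J(\R)\to\Pic(X)\to\Z^m\to0$) to make ``$D$ is cut by a cubic $\Rightarrow$ $[D]=[\scrO_X(3)]$'' precise. Once that contradiction is in place, (b) and (c) are short.
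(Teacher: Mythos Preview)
Your proposal is correct and follows essentially the paper's route: prove (c) first via the partial-degree map $\Pic(X)\to\Z^m$, then for (a) show that a second cubic through $S$ must be coprime to $f_0$ and hence cuts out $D$ on $X_0$, giving $[D]=[\scrO_{X_0}(3)]$ in contradiction to $[D]=\tau_{X_0}$; (b) is then immediate. The one difference is how you obtain coprimality: you invoke Proposition~\ref{rappel}(a) (applied to an $8$-point subset of $S$, the ``no $4$ on a line, no $7$ on a conic'' hypothesis coming from (c)) to pick the second cubic \emph{nonsingular}, hence irreducible; the paper instead takes an arbitrary $h\in I_3(S)\setminus\R f$, sets $g=\gcd(f,h)$, and uses (c) with B\'ezout on the component $V(f/g)$ to force $\deg g=0$. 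Both land on the same contradiction.

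Two small cautions. First, your initial variant---arguing on the nonsingular $X_1$ that $D\sim 3L_1$ there, then ``comparing'' with $X_0$---does not by itself give a contradiction, since the two equalities live in different Picard groups; it is your ``cleanest way'' (computing $\div_{X_0}(f_1)=D$ on $X_0$) that actually works, and that is exactly the paper's endgame. Second, the alternative you suggest for ``no $4$ on a line'' (``directly from $S\subset(X_0)_\reg(\R)$ together with B\'ezout'') is not quite enough when the line is a component of $X_0$; you really do need (c) there, as you also indicate.
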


\begin{proof}
Since $S$ is admissible, there exists a reduced cubic as in
Definition \ref{dfnadm}. In view of (a), it suffices to prove
(a)--(c) for this cubic.
So we may assume that $X=V(f)$ is reduced with $S\subset X_\reg(\R)$,
and that $[\scrO_X(3)]-\sum_{P\in S}[P]$ is a $2$-torsion class in
$\Pic(X)$. Let $X'\subset X$ be an irreducible component, and let
$D'=\sum_{P\in S\cap X'(\R)}P\in\Div(X')$. Then $2[D']=
[\scrO_{X'}(6)]$ in $\Pic(X')$, which implies (c). In particular,
every irreducible component of $X$ contains a nonsingular $\R$-point.
Hence $X(\R)$ is Zariski dense in $X$.
It remains to prove (a). Let $h\in I_3(S)$, and assume $h\notin\R f$.
Writing $g=\gcd(f,h)$ and $f=f'g$, $h=h'g$, the degree $d':=\deg(f')=
\deg(h')$ satisfies $d'\ge1$. Put $X'=V(f')$ and $S'=S\cap X'(\R)$.
Each point of $S$ lies on a single irreducible component of $X$,
therefore $h'$ vanishes on $S'$.
Therefore by (c),
$h'$ has at least $3d'$ different zeros on $X'$. On the other hand
the number of zeros of $h'$ on $X'$ is $d'^2$ by B\'ezout's theorem,
since $\gcd(f',h')=1$. It follows that $d'=3$, and so $\gcd(f,h)=1$.
Hence $D=\div_X(h)$,
which implies $[D]=[\scrO_X(3)]$, contradicting $[D]=\tau_X$. So the
assumption was false, proving~(a).
\end{proof}

\begin{rem}\Label{4poss}%
Let $X$ be the cubic associated to the admissible set $S$. According
to Proposition \ref{def2torscubic}, there are four possibilities for
$X$: Either $X$ is nonsingular, or else irreducible with a node, or
else the union of a line and a conic intersecting transversally in
two $\R$-points, or else a triangle (union of three lines without a
common point).
\end{rem}

\begin{rems}\Label{admissgeom}%
The admissibility condition can be characterized in more elementary
geometric terms, as follows. Let $X=V(f)$ be a plane cubic curve, and
assume for simplicity that $X$ is irreducible.
\smallskip

1.\
Let $S\subset X_\reg(\R)$ be a set with $|S|=9$.
Choose a point $P\in S$ (it does not matter which one), and let
$X'=V(f')$ be a cubic through $S'=S\setminus\{P\}$, different from
$X$.
Let $M$ be the 9th point of intersection of $X$ and $X'$, a
nonsingular point of $X$, and let $t_M$ resp.\ $t_P$ be the tangent
of $X$ at $M$ resp.~$P$.
Then $S$ is admissible if and only if $M\ne P$, the tangents $t_M$
and $t_P$ meet in a point of $X$, and the product $t_Mt_P$ is
semidefinite on $X(\R)$.

Indeed, by \ref{propsadm}(a), $S$ is admissible if and only if $X$
and $S$ satisfy the property in \ref{dfnadm}. Consider the divisors
$D=\sum_{Q\in S}Q$ and $D'=M+\sum_{Q\in S'}Q$ on $X$, and let
$L=\div(l)\in\Div(X)$ be the divisor of a linear form~$l$ with
$l\nmid f$. Then $D'\sim3L$, so $D-3L\not\sim0$ means $M\ne P$.
On the other hand,
$2(D-3L)\sim0$ means $2M\sim2P$ on $X$, which says that $t_M$ and
$t_P$ meet on $X$.
Assuming this condition, $\frac{t_P}{t_M}$ is a rational function on
$X$ whose divisor is $2(P-M)$. So the rational function
$$\frac{f'^2}{l^6}\cdot\frac{t_P}{t_M}$$
on $X$ has divisor $2(D-3L)$,
proving the characterization of the definiteness condition claimed
above.
\smallskip

2.\
Let $X$ be an irreducible plane cubic and $T\subset X_\reg(\R)$ a set
with $|T|=8$. There can be at most one point $Q\in X(\R)$ for which
the set $T\cup\{Q\}$ is admissible. In fact, such $Q$ exists if and
only if $X$ is nonsingular or has a node, and if the unique point
$Q\in X_\reg(\R)$ with $[Q+\sum_{P\in T}P]=\tau_X$ satisfies $Q\notin
T$. In this case, $T\cup\{Q\}$ is admissible.

The previous remark shows how to construct $Q$ geometrically. Indeed,
let $M$ be as before, and let $N$ be the third intersection point of
$t_M$ with $X$. When $X$ is nonsingular or has a node, there is a
unique tangent $t\ne t_M$ to $X$ that passes through $N$ for which
$t_Mt$ is semidefinite on $X(\R)$. This tangent $t$ touches $X$
in~$Q$.
\end{rems}

%-------------------------------------------------------------------%

\section{Constructing nonnegative sextics with nine real zeros}

In this section we prove the converse of Corollary \ref{zsfgpsdsex}.
As before we work in the projective plane $\P^2$ over $\R$ and write
$\x=(x_0,x_1,x_2)$.

We start with two technical lemmas.

\begin{lem}\Label{singularize0}%
Let $X=V(f)$ be a reduced plane cubic, and let $T\subset X_\reg(\R)$
be a set with $|T|=8$ or~$9$. If $|T|=9$, assume that $I_3(T)=\R f$.
Let $g$ be a sextic form with $\gcd(f,g)=1$ such that $i_P(f,g)\ge2$
for every $P\in T$. Then there exists a cubic form $p$ such that the
sextic $g+pf$ is singular in every point of $T$.
\end{lem}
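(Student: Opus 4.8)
The plan is to reduce the statement to a linear-algebra count. For each point $P\in T$ we want the sextic $g+pf$ to be singular at $P$, i.e.\ to have multiplicity $\ge2$ there; since $g+pf$ already vanishes at $P$ (both $f$ and $g$ do, as $P\in X_\reg$ and $i_P(f,g)\ge2$), the condition at $P$ amounts to the vanishing of its two partial derivatives in local coordinates at $P$, i.e.\ two linear conditions on the coefficients of the cubic $p$. Actually one of these two conditions is automatically satisfied: because $P$ is a nonsingular point of $X$, the curve $X=V(f)$ has a well-defined tangent line $t_P$ at $P$, and $i_P(f,g)\ge2$ forces $g$ to vanish to order $\ge2$ along $X$ at $P$, which pins down the derivative of $g$ in the direction \emph{along} $t_P$ (it must match that of a multiple of $f$). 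So, choosing local coordinates $(u,v)$ at $P$ with $u$ cutting out $t_P$, the equation $g+pf$ automatically has no $v$-term; singularity at $P$ is the single extra linear condition that the $u$-coefficient of $g+pf$ vanish. Writing $f = u\cdot(\text{unit}) + (\text{higher order})$ locally, the $u$-coefficient of $pf$ at $P$ is $p(P)$ times that unit, so the condition is simply a prescribed value of $p$ at $P$: one linear condition per point of $T$.

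Thus we must solve $|T|\le 9$ linear conditions on the cubic form $p$, and the space of cubic forms has dimension $10$. If $|T|=8$ this is immediate. If $|T|=9$, the conditions ``$p(P)=c_P$ for $P\in T$'' are solvable iff every linear relation $\sum_{P\in T}\lambda_P\,(\text{evaluation at }P)=0$ among the evaluation functionals on cubics is matched by $\sum\lambda_P c_P=0$. But the evaluation functionals on the $10$-dimensional space of cubics, restricted to the $9$ points of $T$, have a relation space of dimension exactly $10 - \dim I_3(T)^\perp$... more precisely the relations among the nine evaluation functionals correspond to $I_3(T)$, which by hypothesis is $\R f$, one-dimensional. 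So there is (up to scalar) exactly one relation $\sum_{P\in T}\lambda_P\,\mathrm{ev}_P = 0$, and it is the one coming from $f$; I must check that the target values $c_P$ — which are built from $g$ — satisfy the same relation. This is where the hypothesis $\gcd(f,g)=1$ together with $i_P(f,g)\ge2$ enters: the relation coming from $f$ encodes, via the residue/adjunction pairing on the cubic $X$, exactly the statement that a rational function on $X$ with the prescribed behavior exists, and the $c_P$ are the local expansions of $g/f$-type data; the fact that $g$ restricts to a section of $\scrO_X(6)$ with a double zero at each $P$ makes the obstruction vanish. Concretely one can phrase it as: the linear functional $p\mapsto \sum_P \lambda_P\big(c_P - (\text{contribution})\big)$ vanishes because both $g$ and $pf$, viewed on $X$, lie in $H^0(X,\scrO_X(6))$ and the functional factors through this space.

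The main obstacle is exactly this last compatibility check in the case $|T|=9$: showing that the prescribed singularization values at the nine points are consistent, given only $I_3(T)=\R f$. I would handle it by working on the curve $X$ rather than in $\P^2$: restrict everything to $X$, note that $g|_X$ and $(pf)|_X = 0$... no — rather, consider $g|_X \in H^0(X,\scrO_X(6))$, which by $i_P(f,g)\ge2$ lies in the subspace cutting out $2D$ where $D=\sum_{P\in T}P$ (for $|T|=9$; for $|T|=8$ add a ninth point). The condition that $g+pf$ be singular at each $P\in T$ is that $(g+pf)|_X$ vanish to order $\ge 2$ at each such $P$, i.e.\ $g|_X \equiv -(pf)|_X \pmod{2D}$; but $(pf)|_X$ ranges, as $p$ varies over cubics, over $p|_X \cdot (f|_X)$ — which is zero since $f|_X=0$. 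So that is not quite it either, and the correct bookkeeping is the derivative/first-order one described above rather than a naive restriction; the point is that the obstruction lives in a $1$-dimensional space (dual to $I_3(T)=\R f$) and is killed by the divisor condition on $g$. Once solvability of the linear system is established, any solution $p$ gives the desired sextic $g+pf$, completing the proof.
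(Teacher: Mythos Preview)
Your reduction to a value-interpolation problem is correct and is exactly the paper's approach: at each $P\in T$ the gradient of $g$ is proportional to that of $f$ (this is what $P\in X_\reg$ together with $i_P(f,g)\ge2$ gives), so there is a well-defined $\lambda(P)$ with $\nabla g(P)=\lambda(P)\,\nabla f(P)$, and one must find a cubic $p$ with $p(P)=\lambda(P)$ for all $P\in T$.

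However, your analysis of solvability in the case $|T|=9$ contains a genuine linear-algebra error. You write that ``the relations among the nine evaluation functionals correspond to $I_3(T)$.'' This is backwards. The space $I_3(T)$ is the \emph{kernel} of the evaluation map $\Gamma(\P^2,\scrO(3))\to\Gamma(T,\scrO(3))$, whereas linear relations among the functionals $\mathrm{ev}_P$ correspond to the \emph{cokernel} of that map. Since $\dim I_3(T)=1$ by hypothesis and the target has dimension $|T|=9$, the image has dimension $10-1=9$, so the evaluation map is \emph{surjective} and there is no obstruction whatsoever. The phantom compatibility condition you then try to verify (``the obstruction lives in a $1$-dimensional space dual to $I_3(T)$ and is killed by the divisor condition on $g$'') does not exist, which is why your attempts to pin it down kept collapsing.

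There is also a smaller gap in the case $|T|=8$. Saying ``eight conditions in a ten-dimensional space, immediate'' presumes the eight functionals are independent, i.e.\ $\dim I_3(T)=2$. This requires knowing that no four points of $T$ lie on a line and no seven on a conic. The paper extracts this from the existence of $g$: a line $L$ through four points of $T\subset X_\reg$ would have to be a component of $f$, and then $g$ (coprime to $f$) would meet $L$ with multiplicity at least $8>6$; similarly for conics. One then invokes Proposition~\ref{rappel}(a). With both dimension counts in hand, surjectivity of the evaluation map is immediate and the proof is complete---no residue pairing or compatibility check is needed.
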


\begin{proof}
The sextic $g$ shows that no $4$ points of $T$ are on a line and no
$7$ on a conic.
Therefore $\dim I_3(T)=2$ in the case $|T|=8$ (\ref{rappel}). Let
$f_{x_i}:=\partial f/\partial x_i$ ($i=0,1,2$) denote the partial
derivatives. From the assumptions it follows that
$$\rk\begin{pmatrix}f_{x_0}&f_{x_1}&f_{x_2}\\g_{x_0}&g_{x_1}&g_{x_2}
\end{pmatrix}(P)\ =\ 1$$
at every point $P\in T$.
Hence there exists a unique section $\lambda\in\Gamma(T,\scrO(3))$
such that $g_{x_i}-\lambda f_{x_i}$ vanishes on $T$ ($i=0,1,2$).
The natural map $\Gamma(\P^2,\scrO(3))\to\Gamma(T,\scrO(3))$ is
surjective since its kernel $I_3(T)$ has dimension $10-|T|$. Hence
there exists a cubic form $p$ which restricts to $\lambda$ on $S$.
The sextic $q-pf$ is singular in the points of $T$.
\end{proof}

\begin{lem}\Label{makelocalpsd}%
Let $X=V(f)$ be a plane curve of degree $d$, let $g$ be a form of
degree $2d$, and let $P\in X_\reg(\R)$. Suppose that $g$ is singular
at $P$ and $i_P(f,g)=2$. If $g\ge0$ locally around $P$ on $X(\R)$,
there exists $t>0$ such that $g+tf^2\ge0$ locally around $P$ in the
plane.
\end{lem}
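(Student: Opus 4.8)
The plan is to work in a local analytic chart near $P$ and use the hypothesis $i_P(f,g)=2$ to understand the geometry of $g=0$ along the branch $X(\R)$. Since $P\in X_\reg(\R)$, I may choose local analytic coordinates $(u,v)$ centered at $P$ so that $X$ is the locus $v=0$; then the branch of $X(\R)$ near $P$ is parametrized by $u\mapsto(u,0)$. The condition $i_P(f,g)=2$ says that the restriction $g(u,0)$ vanishes to order exactly $2$ in $u$, so $g(u,0)=cu^2+O(u^3)$ with $c\ne0$; the hypothesis that $g\ge0$ locally on $X(\R)$ forces $c>0$. Meanwhile $g$ is singular at $P$, so $g(u,v)=a u^2+2buv+ dv^2+(\text{higher order})$ with $(a,b,d)$ the Hessian data; comparing with the restriction gives $a=c>0$.

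Next I would write $g(u,v)=Q(u,v)+R(u,v)$ where $Q=au^2+2buv+dv^2$ is the quadratic part (with $a>0$) and $R$ is the remainder of order $\ge3$. The idea is that adding $tf^2$ perturbs only the coefficient of $v^2$ in the quadratic part: since $f=v\cdot(\text{unit})$ locally, $f^2=v^2\cdot(\text{unit}^2)$, so $tf^2$ has quadratic part $tv^2$ (times the value of the unit squared at $P$, a positive constant which I absorb). Thus $g+tf^2$ has quadratic part $au^2+2buv+(d+t')v^2$ where $t'$ is a positive multiple of $t$. For $t$ large enough this quadratic form becomes positive definite, because its discriminant $a(d+t')-b^2$ becomes positive while $a>0$ is fixed. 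A positive definite quadratic leading term dominates the higher-order remainder in a small enough neighborhood, giving $g+tf^2>0$ on a punctured neighborhood of $P$ and $=0$ at $P$, hence $\ge0$ locally in the plane.

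The one point that needs a little care, and which I expect to be the main (minor) obstacle, is justifying that the higher-order terms — both the order-$\ge3$ part of $g$ and the correction terms coming from $f^2$ not being exactly $v^2$ — do not interfere with positivity once the quadratic part is definite. This is the standard fact that a real analytic (or smooth) function whose $2$-jet at a point is a positive definite quadratic form is nonnegative near that point; one proves it by the estimate $au^2+2buv+(d+t')v^2\ge \delta(u^2+v^2)$ for some $\delta>0$ together with $|R(u,v)|\le C(|u|^2+|v|^2)^{3/2}$ on a compact neighborhood, so that for $(u,v)$ small the quadratic term wins. I would also note that $t'$ depends linearly and positively on $t$, so "for $t$ large" is the same as "for some $t>0$", and once one such $t$ works, so does every larger one; but the statement only asks for existence of one $t>0$, so a single choice making the discriminant positive and then shrinking the neighborhood suffices.
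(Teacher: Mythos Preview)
Your argument is correct and is essentially the same as the paper's: both reduce to showing that the Hessian (quadratic part) of $g+tf^2$ at $P$ becomes positive definite for large $t$, using $i_P(f,g)=2$ and $g\ge0$ on $X(\R)$ to secure $a>0$, and then invoking the standard fact that a positive definite $2$-jet forces local nonnegativity. The paper states this in two lines (``the tangent of $X$ at $P$ is not a tangent of $V(g)$ at $P$, hence $g+tf^2$ has positive definite Hessian at $P$''), while you spell out the coordinate computation and the domination of the cubic remainder explicitly.
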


\begin{proof}
Given the other assumptions, the condition $i_P(f,g)=2$ means that
the tangent of $X=V(f)$ at $P$ is not a tangent of the curve $V(g)$
at~$P$. Therefore, for sufficiently large $t>0$, $g+tf^2$ has
positive definite Hessian at $P$ in local affine coordinates, proving
the lemma.
\end{proof}

In the following let always $S\subset\P^2(\R)$ be an admissible set
(Definition \ref{dfnadm}), and let $X=V(f)$ be the cubic associated
to $S$, see \ref{propsadm}. Writing $D=\sum_{P\in S}P\in\Div(X)$ we
have $[D]=\tau_X$.

\begin{lem}\Label{singularize}%
There exists a sextic form $q$ with $\gcd(f,q)=1$ that is singular in
the points of $S$.
\end{lem}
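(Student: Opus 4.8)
The plan is to produce the desired sextic $q$ by starting from a reducible (hence psd, in fact sos) sextic built from the data defining admissibility, and then correcting it so that it becomes singular at every point of $S$ while remaining prime to $f$. Since $[D]=\tau_X=\tau+[\scrO_X(3)]$ with $\tau$ a nonzero definite $2$-torsion class, the class $2D-6L$ is trivial on $X$: there is a rational function on $X$, of the form $g_0/l^6$ for a suitable sextic form $g_0$ prime to $f$ and a linear form $l$ with $l\nmid f$, whose divisor on $X$ is $2D-6L$. Concretely, one picks the sextic $g_0$ with $\div_X(g_0)=2D$; such a $g_0$ exists because $2D\sim 6L\sim[\scrO_X(6)]$, i.e.\ $2D$ is cut out on $X$ by a sextic, and $\gcd(f,g_0)=1$ since $2D$ is supported on $S\subset X_\reg$ and has the right degree $18=\deg(f)\cdot 6$ by B\'ezout. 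This $g_0$ satisfies $i_P(f,g_0)\ge 2$ for every $P\in S$, because $\div_X(g_0)=2D$ has multiplicity $2$ at each $P\in S$.

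Now I invoke Lemma~\ref{singularize0} with $T=S$ (here $|T|=9$ and $I_3(S)=\R f$ by \ref{propsadm}(a), and $\gcd(f,g_0)=1$, $i_P(f,g_0)\ge 2$ on $S$, as just checked): there is a cubic form $p$ such that $q:=g_0+pf$ is singular at every point of $S$. It remains only to check $\gcd(f,q)=1$. But $\gcd(f,q)=\gcd(f,g_0+pf)=\gcd(f,g_0)=1$, so this is automatic.

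The one point that needs care — and which I expect to be the main obstacle — is the \emph{existence} of the sextic $g_0$ with $\div_X(g_0)=2D$ and $\gcd(f,g_0)=1$, especially when $X$ is reducible. One has to be sure that the effective divisor $2D$ on $X_\reg$, which is linearly equivalent to $\scrO_X(6)$, is actually cut out by a \emph{single} sextic form transversally to $X$ (i.e.\ not containing any component of $X$). For $X$ irreducible this is immediate from the identification of $\Pic(X)$ with divisor classes and the surjectivity of $\Gamma(\P^2,\scrO(6))\to\Gamma(X,\scrO(6))$; for $X$ reducible one uses \ref{propsadm}(c), which guarantees that $2D$ restricts on each component $X'$ of degree $d'$ to a divisor of degree $6d'$ lying in the class $[\scrO_{X'}(6)]$, so that the divisor class $[2D]=[\scrO_X(6)]$ is represented by a sextic, and a generic such sextic meets $X$ only along $\mathrm{supp}(2D)\subset X_\reg$, hence is prime to $f$. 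Alternatively one can bypass this by applying Lemma~\ref{singularize0} directly to \emph{any} sextic $g$ with $\div_X(g)\ge 2D$ and $\gcd(f,g)=1$ (such $g$ exists since imposing $2D$ on a sextic is a linear condition and a generic solution is prime to the components of $X$), which already has $i_P(f,g)\ge 2$ on $S$; the lemma then produces the singular sextic $q=g+pf$, still prime to $f$.
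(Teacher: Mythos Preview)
Your proof is correct and follows essentially the same approach as the paper: use $2[D]=[\scrO_X(6)]$ to find a sextic $g_0$ prime to $f$ with $\div_X(g_0)=2D$, then apply Lemma~\ref{singularize0} (with $I_3(S)=\R f$ from \ref{propsadm}(a)) to singularize it at $S$. The paper's argument is terser and does not spell out the reducible case; your additional discussion of why $g_0$ can be taken prime to $f$ when $X$ has several components is a welcome clarification rather than a different method.
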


\begin{proof}
Since $2[D]=[\scrO_X(6)]$, there exists a sextic form $g$ with
$\gcd(f,q)=1$ such that $\div_X(q)=2D$.
We can apply Lemma \ref{singularize0} since $I_3(S)=\R f$ holds by
\ref{propsadm}(a).
This proves the lemma.
\end{proof}

\begin{lem}\Label{expsdsextic}%
Let $q$ be a sextic as in Lemma \ref{singularize}. There exists a
real number $t>0$ and a choice of sign $\pm$ such that $tf^2\pm q$ is
psd (on $\P^2(\R)$).
\end{lem}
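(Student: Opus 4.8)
The plan is to analyze the sextic $q$ locally at each point $P\in S$ and show that, after possibly replacing $q$ by $-q$, it is ``locally semidefinite'' on $X(\R)$ near every point of $S$, so that Lemma~\ref{makelocalpsd} applies. First I would observe that, by Lemma~\ref{singularize}, $\div_X(q)=2D$ (the construction in \ref{singularize} only adds a multiple of $f$, which does not change the divisor on $X$), so $q$ vanishes to order exactly $2$ at every $P\in S$ along $X$; in particular $i_P(f,q)=2$. The key point is that the rational function $q/l^6$ on $X$ (for a linear form $l$ with $l\nmid f$) has divisor $2(D-3L)=2\tau$, and since $\tau=[D-3L]$ is a \emph{definite} $2$-torsion class (Definition~\ref{dfndef}, and this is exactly the content of what admissibility gives us via \ref{propsadm} together with the characterization after \ref{dfndef}), there is a rational function $h\in\R(X)^*$, invertible around $X_\sing$, with $\div_X(h)=2(D-3L)$ and $h\ge0$ on $X(\R)$. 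Comparing divisors, $q/l^6$ and $h$ differ by a nowhere-vanishing rational function on $X$, i.e. by a nonzero constant (since $X$ is projective and $X(\R)$ is Zariski dense, a unit in $\scrO(X)$ is a nonzero real scalar). Hence $q/l^6$ is, up to a global sign, nonnegative on all of $X(\R)$ where it is defined; replacing $q$ by $-q$ if necessary we may assume $q\ge0$ on $X(\R)$, hence locally $q\ge0$ on $X(\R)$ around each $P\in S$.

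Next, at each $P\in S$ I invoke Lemma~\ref{makelocalpsd} with $d=3$: since $q$ is singular at $P$ (Lemma~\ref{singularize}), $i_P(f,q)=2$, $P\in X_\reg(\R)$, and $q\ge0$ locally on $X(\R)$ around $P$, there is $t_P>0$ with $q+t_Pf^2\ge0$ locally around $P$ in the plane. Taking $t_0=\max_{P\in S}t_P$, the sextic $q+t_0f^2$ is then psd in a neighbourhood of every point of $S$ in $\P^2(\R)$.

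It remains to upgrade this local nonnegativity near $S$ to global nonnegativity on all of $\P^2(\R)$, and this is the step I expect to be the main obstacle. Away from $S$ the situation is not automatically controlled: the sextic $q+t_0f^2$ could still be negative somewhere far from $S$. The natural device is to let $t\to\infty$: the forms $\frac1t q+f^2$ converge (uniformly on $\P^2(\R)$, say after fixing a representative on the sphere) to $f^2\ge0$, so for $t$ large $q+tf^2=t\bigl(\frac1tq+f^2\bigr)$ is $\ge0$ on the complement of any fixed neighbourhood of $V_\R(f)$. The difficulty is that $V_\R(f)$ is where $f^2$ vanishes, so compactness alone does not finish the argument near $V_\R(f)\setminus S$; one must use that on $X(\R)$ itself $q\ge0$ (established above), together with the fact that at a point $\xi\in X(\R)\setminus S$ we have $i_\xi(f,q)=0$ generically, so $q(\xi)>0$ and a neighbourhood argument applies — while at the finitely many remaining points (further intersections of $V(f)$ with $V(q)$, and singular points of $X$, where $q$ is a unit and hence $q\ge\delta>0$ nearby) one argues pointwise. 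Assembling a finite cover of $\P^2(\R)$ from these local pieces and taking $t$ larger than the finitely many thresholds yields the desired $t>0$ with $q+tf^2$ psd on $\P^2(\R)$. The sign $\pm$ in the statement is precisely the global sign fixed in the first paragraph to make $q\ge0$ on $X(\R)$.
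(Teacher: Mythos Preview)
Your proposal is correct and follows essentially the same approach as the paper: use the definiteness of $\tau_X$ to get $q\ge0$ on $X(\R)$ (after a sign change), apply Lemma~\ref{makelocalpsd} at each point of $S$, and then globalize via compactness. The paper packages your final paragraph as a separate clean statement (Lemma~\ref{lem1}): since $q\ge0$ on $X(\R)$ and $\div_X(q)=2D$ forces $q(\xi)>0$ for every $\xi\in X(\R)\setminus S$, the hypothesis of that lemma is met at \emph{every} real zero of $f^2$, and one simply covers $\P^2(\R)$ by finitely many neighborhoods and takes the maximum $t$---so the step you flagged as the ``main obstacle'' is in fact routine once you have $q\ge0$ on all of $X(\R)$.
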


\begin{proof}
The sextic $q$ has $\div_X(q)=2D$. Let $l$ be a linear form not
dividing $f$, and let $L=\div(l)\in\Div(X)$. Since the class
$[D-3L]=\tau_X\in{}_2\Pic(X)$ is definite by assumption, the rational
function $q/l^6$ on $X$ is semidefinite on $X(\R)$.
After replacing $q$ with $-q$ if necessary, we can assume that
$q\ge0$ on $X(\R)$. For any point $P\in S$ the local intersection
number $i_P(f,q)=2$, so by Lemma \ref{makelocalpsd} there exists
$t>0$ such that $tf^2+q$ is psd around~$P$. Now the assertion follows
from the following Lemma \ref{lem1}, which is an easy compactness
argument.
\end{proof}

\begin{lem}\Label{lem1}%
Let $p,\,q\in\R[\x]$ be forms of the same even degree. Assume that
$p$ is psd, and that for every real zero $P$ of $p$ there is a real
number $t$ such that $q+tp$ is nonnegative in a neighborhood of $P$.
Then there exists $t\in\R$ such that the form $q+tp$ is psd.
\end{lem}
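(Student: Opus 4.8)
The plan is to argue by compactness on the sphere $S^2 = \{x \in \R^3 : |x| = 1\}$, reducing the global nonnegativity of $q + tp$ to a finite union of local conditions. First I would list the real zeros of $p$: since $p$ is psd and not identically zero, $V_\R(p)$ may still be infinite in principle, but the key point is that the set of zeros of $p$ \emph{on the sphere} is compact. For each zero $P$ (as a point of $S^2$, identifying antipodal points only at the end), the hypothesis gives a real number $t_P$ and an open neighborhood $U_P \subset S^2$ on which $q + t_P p \ge 0$. I would also want to enlarge the family of ``good'' parameters: if $q + t_P p \ge 0$ on $U_P$, then for every $t \ge t_P$ we also have $q + tp = (q + t_P p) + (t - t_P)p \ge 0$ on $U_P$, because $p \ge 0$ everywhere. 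So each $U_P$ comes with a threshold $t_P$ such that $q + tp \ge 0$ on $U_P$ for all $t \ge t_P$.

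Next I would cover the complement. The set $Z = \{x \in S^2 : p(x) = 0\}$ is closed, hence compact, and is contained in the open set $\bigcup_{P \in Z} U_P$; extract a finite subcover $U_{P_1}, \dots, U_{P_n}$ and set $t_0 = \max_i t_{P_i}$, so that $q + tp \ge 0$ on $U := \bigcup_i U_{P_i}$ for every $t \ge t_0$. On the compact complement $K = S^2 \setminus U$ we have $p > 0$ (every zero of $p$ on the sphere lies in $U$), so $p$ attains a positive minimum $m > 0$ on $K$, while $q$ attains a minimum, say $-c$, on $K$ (with $c$ possibly negative, in which case there is nothing to do). Choosing $t_1 = \max\{0, c/m\}$ we get $q + tp \ge -c + t m \ge 0$ on $K$ for all $t \ge t_1$. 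Then $t := \max\{t_0, t_1\}$ works: $q + tp \ge 0$ on $U \cup K = S^2$, and since $q + tp$ is a form of even degree, nonnegativity on the unit sphere is equivalent to nonnegativity on all of $\R^3$, i.e.\ $q + tp$ is psd.

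The only genuinely delicate point is the transition from the neighborhoods $U_P$ in $\P^2(\R)$ (where the statement is phrased) to honest open subsets of the compact sphere $S^2$, and the verification that the finitely many local thresholds can indeed be amalgamated into one global threshold. This is handled cleanly by the monotonicity observation $q + tp = (q + t_P p) + (t - t_P)p$ together with $p \ge 0$: raising $t$ never destroys a local nonnegativity that has already been achieved, so taking the maximum of finitely many thresholds is harmless. Everything else is the standard two-step compactness argument — cover the (compact) zero set of $p$ by good neighborhoods, and on the remaining compact set exploit that $p$ is bounded below by a positive constant — so I do not expect any real obstacle beyond setting up these bookkeeping details carefully.
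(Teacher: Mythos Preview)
Your argument is correct and is essentially the same compactness argument as the paper's: both exploit that $p\ge0$ makes $t\mapsto q+tp$ monotone, so finitely many local thresholds can be replaced by their maximum. The only cosmetic difference is that the paper covers all of $\P^2(\R)$ in one step (implicitly noting that points where $p>0$ also admit a good neighborhood), whereas you split into the zero set of $p$ and its complement; this is the same proof with slightly different bookkeeping.
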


\begin{proof}
Since $\P^2(\R)$ is compact, there exist finitely many open sets
$U_i\subset\P^2(\R)$ and real numbers $t_i$ ($i=1,\dots,r$), such
that $\P^2(\R)$ is covered by the $U_i$ and $q+t_ip\ge0$ on $U_i$ for
every~$i$. It suffices to take $t=\max\{t_1,\dots,t_r\}$.
\end{proof}

The next result completes and summarizes the discussion:

\begin{thm}\Label{sumcomp}%
Let $S\subset\P^2(\R)$ be an admissible set with associated cubic
$X=V(f)$ (see \ref{propsadm}). The space $I_6(2S)$ of sextics
singular in $S$ has dimension~$2$. The psd sextics vanishing in $S$
form a $2$-dimensional cone $P_6(S)$ in $I_6(2S)$, while $\Sigma_6(S)
=\R_\plus f^2$ has dimension~$1$. There exists a unique (up to
positive scaling) sextic in $P_6(S)\setminus\Sigma_6(S)$ that is
extreme in the psd cone.
\end{thm}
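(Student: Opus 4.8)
The plan is to prove the four assertions in order, using the preliminary lemmas. First I would establish $\dim I_6(2S) = 2$. The lower bound is easy: singularity at each of the nine points imposes $9 \cdot 3 = 27$ conditions on the $28$-dimensional space of sextics, so $\dim I_6(2S) \ge 1$; but Lemma \ref{singularize} produces a sextic $q$ with $\gcd(f,q)=1$, and $f^2 \in I_6(2S)$ is not a multiple of such a $q$ (since $f^2$ is not prime to $f$), so $\dim I_6(2S) \ge 2$. For the upper bound I would argue on the curve $X$: if $h \in I_6(2S)$ is prime to $f$, then $\div_X(h) \ge 2D$, and since both sides have degree $18 = 2\cdot 9$ we get $\div_X(h) = 2D$; two such forms $h_1, h_2$ then have $h_1/h_2$ a rational function on $X$ with trivial divisor, hence (since $X(\R)$ is Zariski dense and $\scrO(X)^* = \R^*$ — using that $X$ is geometrically connected, which follows from \ref{propsadm}) a nonzero constant, so the space of such $h$ is one-dimensional modulo $\R f^2$; adding back the multiples of $f^2$ gives $\dim I_6(2S) = 2$. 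One subtlety: when $X$ is reducible, $\gcd(f,h)=1$ need not hold automatically, so I would need to rule out that $h$ is divisible by a proper component $X'$ of $X$; this is handled exactly as in the proof of \ref{propsadm}(a), using \ref{propsadm}(c) and B\'ezout.

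Next, $\Sigma_6(S) = \R_\plus f^2$: by \ref{anycubic}(b) (or the argument in \ref{anycubic}), since $I_3(S) = \R f$, the only sos sextics vanishing on $S$ are sums of squares of forms in $I_3(S) = \R f$, i.e. nonnegative multiples of $f^2$. Then $P_6(S)$ is a closed convex cone inside the $2$-dimensional space $I_6(2S)$ — note every psd sextic vanishing at the nine points of $S$ is automatically singular there, so $P_6(S) \subset I_6(2S)$ — and it is pointed (contains no line, since $-p$ psd and $p$ psd forces $p = 0$). Lemma \ref{expsdsextic} applied to the $q$ of Lemma \ref{singularize} shows $P_6(S)$ contains a sextic $p_0 := tf^2 + q$ not on the ray $\R_\plus f^2$ (because $q \notin \R f^2$), so $P_6(S)$ is genuinely $2$-dimensional. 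A pointed $2$-dimensional closed convex cone has exactly two extreme rays; one of them is $\R_\plus f^2$ (which is extreme in $P_6(S)$ because $f^2$ vanishes to order exactly... actually more directly: $f^2$ spans a one-dimensional face, being the unique sos point). The other extreme ray $\R_\plus q_S$ therefore consists of sextics not in $\Sigma_6(S)$; it remains to see that a generator $q_S$ is extreme in the full cone $P_6$, not merely in the face $P_6(S)$.

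For that last point — which I expect to be the main obstacle — I would use that $P_6(S)$ is a face of $P_6$: if $q_S = q_1 + q_2$ with $q_1, q_2 \in P_6$, then since $q_S$ vanishes on $S$ and $q_1, q_2$ are psd, both $q_1$ and $q_2$ vanish on $S$, hence lie in $P_6(S)$; extremality of $\R_\plus q_S$ within $P_6(S)$ then forces $q_1, q_2 \in \R_\plus q_S$. Combined with $q_S \notin \Sigma_6$, this is exactly the definition of $q_S$ being extreme in the sense of the paper. So the real content is just the face property of $P_6(S)$ in $P_6$, which was already recorded in Section~3 ("The psd forms of degree~$2d$ that vanish in $T$ form a face of the cone $P_{2d}$"), together with the elementary geometry of pointed $2$-dimensional cones. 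Uniqueness up to positive scaling is automatic since a $2$-dimensional pointed cone has only two extreme rays and one of them, $\R_\plus f^2$, lies in $\Sigma_6$.
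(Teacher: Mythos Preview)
Your proof is correct and follows essentially the same approach as the paper: both obtain $\dim I_6(2S)=2$ by exhibiting $f^2$ and the $q$ of Lemma~\ref{singularize}, then showing that any $h\in I_6(2S)$ coprime to $f$ has $\div_X(h)=2D$ and hence lies in $\R q+\R f^2$. The only differences are cosmetic: the paper carries out the reducible-$X$ B\'ezout argument afresh in this proof (with sextic rather than cubic numerics, so ``exactly as in~\ref{propsadm}(a)'' is a slight overstatement, though the method is the same), while you spell out the face/two-extreme-ray reasoning for extremality in $P_6$ that the paper leaves implicit.
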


\begin{proof}
By Lemma \ref{expsdsextic} there exists a psd sextic $q$ with
$(f,q)=1$ that vanishes in $S$. Such $q$ cannot be a sum of squares
since $q=\sum_iq_i^2$ implies $q_i\in I_3(S)$, and since $I_3(S)=
\R f$ by \ref{propsadm}(a).
So it only remains to show that
$I_6(2S)$ is spanned by $q$ and $f^2$.
The argument is slightly technical since $X$ may be reducible.
Let $0\ne h\in I_6(2S)$, let $f_1=\gcd(f,h)$, and write $f=f_1f_2$
and $h=f_1h_2$. Let $d_i=\deg(f_i)$ ($i=1,2$), then we have
$d_1+d_2=3$ and $\deg(h_2)=6-d_1=3+d_2$. Let $X_2:=V(f_2)$ and
$S_2:=S\cap X_2(\R)$. Then $|S_2|=3d_2$ by \ref{propsadm}(c). Since
$\gcd(f_2,h_2)=1$, the form $h_2$ has precisely $(6-d_1)d_2$ zeros on
$X_2$ by B\'ezout. On the other hand, $h_2$ has multiplicity $\ge2$
at each point of $S_2$,
and so $h_2$ has at least $2|S_2|=6d_2$ zeros on $X_2$. It follows
that $d_1=0$ or $d_2=0$. If $d_1=0$ then $\gcd(f,h)=1$, and
we conclude $\div_X(h)=2D=\div_X(q)$, so $h=cq+ff'$ (mod~$f$) with
some $c\in\R^*$ and some cubic form $f'$.
Necessarily $f'\in I_3(S)$, so
$h\in\R q+\R f^2$. On the other hand, if $d_2=0$ then $h=ff'$
with some cubic form $f'$, and using again the argument just given we
conclude $h\in\R f^2$.
\end{proof}

\begin{cor}\Label{admiffexpsdnonsossext}%
A set $S\subset\P^2(\R)$ with $|S|=9$ is admissible if and only if
there exists a psd sextic vanishing in $S$ that is not a sum of
squares.
\end{cor}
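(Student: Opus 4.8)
The plan is to observe that both implications have already been established in the preceding development, so the proof is essentially a matter of assembling the pieces in the right order.

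For the direction ``admissible $\To$ psd non-sos sextic exists'': I would start from an admissible $S$ with associated cubic $X=V(f)$ and $D=\sum_{P\in S}P$, $[D]=\tau_X$. Lemma~\ref{expsdsextic} produces a sextic $q$ with $\gcd(f,q)=1$, $\div_X(q)=2D$, and a scaling making $tf^2\pm q$ psd; after absorbing the sign and the scaling we have a psd sextic vanishing in $S$ which is prime to $f$. It remains to see this sextic is not sos, and here I would repeat the short argument from the proof of Theorem~\ref{sumcomp}: a representation as a sum of squares of cubics would force each summand to lie in $I_3(S)=\R f$ by \ref{propsadm}(a), hence the sextic would be a multiple of $f^2$, contradicting $\gcd(f,q)=1$. (Alternatively one may simply cite Theorem~\ref{sumcomp}, whose conclusion $P_6(S)\setminus\Sigma_6(S)\ne\emptyset$ is exactly this statement.)

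For the converse, suppose $g$ is a psd sextic vanishing in $S$ that is not a sum of squares. By \ref{assmpts}, $g$ cannot be reducible over $\R$ (a reducible psd sextic is sos), so $g$ is $\R$-irreducible; since every real zero of $g$ is a singular point of the irreducible curve $V(g)$, the set $V_\R(g)$ is finite, and $S\subset V_\R(g)$ with $|S|=9$. Corollary~\ref{zsfgpsdsex} then says precisely that $S$ is admissible.

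I do not expect any genuine obstacle here: the corollary is a summary statement, and the two nontrivial inputs (Lemma~\ref{expsdsextic}/Theorem~\ref{sumcomp} for one direction, Corollary~\ref{zsfgpsdsex} for the other) have already been proved. The only point that needs a word of care is ensuring $V_\R(g)$ is finite before invoking \ref{zsfgpsdsex}, which is handled by the $\R$-irreducibility observation from \ref{assmpts}.
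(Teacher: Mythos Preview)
Your proposal is correct and matches the paper's own proof, which simply cites Corollary~\ref{zsfgpsdsex} and Theorem~\ref{sumcomp}. Your extra care about the finiteness of $V_\R(g)$ is harmless but already absorbed into the standing assumptions of Section~3 (see \ref{assmpts}), so the paper does not spell it out again.
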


\begin{proof}
Follows from \ref{zsfgpsdsex} and \ref{sumcomp}.
\end{proof}

\begin{notat}\Label{nontatgs}%
Let $S\subset\P^2(\R)$ be an admissible set. We denote the unique (up
to scaling)
extreme psd form in $P_6(S)$ by $q_S$, bearing in mind the ambiguity
by a scalar factor.
\end{notat}

\begin{rem}
In the situation of \ref{sumcomp}, fix any $q\in I_6(2S)$ with
$q\notin\R f^2$. Replacing $q$ with $-q$ if necessary we can assume
$q\ge0$ on $X(\R)$. Then there exists a minimal real number $s$ for
which the form $q_s:=q+sf^2$ is psd, by Lemma \ref{expsdsextic} and
since the psd cone is closed. We have $q_s=q_S$, and the cone
$P_6(S)$ is generated by $f^2$ and $q_S$. The number $s$ resp.\ the
extreme form $q_S$ can be found numerically by solving a
semidefinite program, since $q_t$ is psd if and only if there exists
a quadratic form $p\ne0$ for which $pq_t$ is a sum of squares
(Hilbert \cite{hi93}).
Other ways of finding $q_S$ are discussed below (Remark
\ref{find10thzero}, Corollary \ref{9thzerononic} and Remark
\ref{remfind10thzero}).
\end{rem}

\begin{prop}\Label{10thzero}%
Let $S$ be admissible with associated cubic $X=V(f)$, let
$I_6(2S)=\spn(f^2,q)$ with $q\ge0$ on $X(\R)$. Write $q_t:=q+tf^2$
for $t\in\R$, and let $s\in\R$ be the minimal number for which $q_s$
is psd (so $q_s=q_S$).
\begin{itemize}
\item[(a)]
For each point $P\in S$ there exists a unique number $t(P)\in\R$ for
which $P$ is neither a node nor an acnode of the sextic $q_{t(P)}=0$.
\item[(b)]
$s\>\ge\>\max\{t(P)\colon P\in S\}$.
\item[(c)]
For generic choice of $S$ on $X$ the inequality (b) is strict, and
the extreme psd sextic $q_S$ has a tenth real zero $P$ with $P\notin
X(\R)$.
\item[(d)]
The sextic $q_S=0$ is a (geometrically) rational curve.
\end{itemize}
\end{prop}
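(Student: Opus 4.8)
The plan is to analyze each part by looking at what happens on the cubic $X$ and at the zeros of $q_S$, using the divisor calculus on $X$ together with the local analysis from Lemmas \ref{makelocalpsd} and \ref{lem1}.

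For part (a), fix $P\in S$. Since $q$ and $f^2$ both vanish to order $2$ at $P$ but $\div_X(q)=2D$ forces the tangent cone of $V(q)$ at $P$ to contain the tangent line $t_P$ of $X$ with multiplicity $\ge2$ (because $i_P(f,q)=2$ means the local intersection is exactly the double point, so the tangent cone of $q$ at $P$ is $t_P^2$), while $f^2$ has tangent cone $t_P^2$ as well. Hence the tangent cone of $q_t=q+tf^2$ at $P$ is a binary quadratic form in the two local coordinates whose discriminant is an affine-linear function of $t$; there is exactly one value $t(P)$ at which this discriminant vanishes, i.e.\ at which the tangent cone is a perfect square (a cusp or worse), and for $t\ne t(P)$ it is either a node (two real tangents) or an acnode (complex conjugate tangents). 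This gives uniqueness of $t(P)$; the main point to check carefully is that the discriminant really does depend nontrivially (degree exactly $1$) on $t$, which follows because $f^2$ itself is a nonzero square tangent cone, so adding $tf^2$ genuinely moves the discriminant.

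For part (b): suppose $s<t(P)$ for some $P\in S$. For $t$ slightly less than $t(P)$ the tangent cone of $q_t$ at $P$ is indefinite — a genuine node — so $q_t$ takes both signs arbitrarily close to $P$, hence $q_t$ is not psd; a fortiori $q_s$ with $s<t(P)$ is, by the same local picture, not psd at $P$ unless the tangent cone has become semidefinite, which happens only for $t\ge t(P)$ on the relevant side. So psd-ness of $q_s$ forces $s\ge t(P)$ for every $P\in S$, giving $s\ge\max_P t(P)$. (One must observe that for $t$ on the "wrong" side of $t(P)$ the node is real rather than an acnode; this is where the definiteness hypothesis on $\tau_X$ enters, guaranteeing that on the correct side the local sign is $\ge0$.)

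For part (c): the inequality in (b) is an equality only if the extra zero that forces $q_S$ to be on the boundary of the psd cone already sits at one of the nine points of $S$, which is a closed algebraic condition on $S$ inside the family of admissible nine-point sets on a fixed $X$; generically it fails, so $s>\max_P t(P)$. Then at every $P\in S$ the sextic $q_S=q_s$ is a strict node or acnode (tangent cone indefinite or definite but not a square), yet $q_S$ is psd; psd-ness at a node is impossible, so every $P\in S$ must be an acnode of $q_S$. A psd sextic that is not sos has at most ten real zeros, and $V_\R(q_S)$ is finite (otherwise $q_S$ would be reducible, hence sos, contradicting \ref{sumcomp}); since $q_S$ is a boundary point of the psd cone and not a square, a standard argument (it lies on a face, and an extreme non-square psd ternary sextic has a tenth zero — indeed the curve $V(q_S)$ must be singular at each real zero, and a count of the arithmetic genus of the sextic versus the number of nodes/acnodes imposed) shows there is a tenth real zero $P_{10}$. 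This $P_{10}$ cannot lie on $X$: a tenth point of $S\cup\{P_{10}\}$ on $X_\reg(\R)$ would give $i_{P_{10}}(f,q_S)\ge2$ and hence $X\cdot V(q_S)\ge 20>18$, impossible; and $P_{10}$ cannot be a singular point of $X$ by \ref{anycubic}. So $P_{10}\notin X(\R)$. The main obstacle here is making precise the existence of the tenth zero — the cleanest route is to count: $q_S$ has exactly nine acnodes on $X$ plus possibly other singularities; the arithmetic genus of a plane sextic is $10$, each acnode drops the geometric genus by $1$, and since $q_S$ is irreducible (by \ref{anycubic}(b)) with geometric genus $\ge0$, there is room for exactly one more singular point (a tenth node/acnode, the genus-$0$ case being forced in (d)).

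For part (d): from (c) the irreducible sextic $V(q_S)$ has (at least) ten singular points counted with the usual $\delta$-invariants — the nine acnodes on $X$ and the tenth zero $P_{10}$ — but a plane sextic has arithmetic genus $\binom{5}{2}=10$, so $\delta$-invariants summing to $\ge10$ force the geometric genus to be $\le0$, hence $=0$; thus $V(q_S)$ is a rational curve. (If a higher singularity occurs at some $P\in S$, its $\delta$-invariant is $\ge$ the number of ordinary nodes it absorbs, so the count only improves.) Therefore $q_S=0$ is geometrically rational, as claimed.
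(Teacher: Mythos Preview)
Your overall strategy is the right one, but there are two genuine errors.

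\medskip

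\textbf{Part (a).} You assert that $i_P(f,q)=2$ forces the tangent cone of $q$ at $P$ to be $t_P^2$. This is exactly backwards. In local coordinates $(x,y)$ with $t_P=\{x=0\}$, the tangent cone of $q$ is some quadratic $ax^2+bxy+cy^2$, and $i_P(f,q)=2$ means precisely that $c\ne 0$, i.e.\ that $t_P$ is \emph{not} a tangent of $V(q)$ at $P$; if the tangent cone were $t_P^2$ one would have $i_P(f,q)\ge 4$. Moreover, $q\ge 0$ on $X(\R)$ gives $c>0$. The tangent cone of $q_t$ is then $(a+t)x^2+bxy+cy^2$, whose discriminant $b^2-4c(a+t)$ is affine in $t$ with nonzero slope $-4c$; this is why $t(P)$ is unique. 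Your stated justification (``adding $tf^2$ genuinely moves the discriminant because $f^2$ has nonzero square tangent cone'') does not by itself give the needed $c\ne 0$, and if both tangent cones were $t_P^2$ as you claim, the sum would never be a node or an acnode.

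\medskip

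\textbf{Part (c).} Your argument for the existence of a tenth \emph{real} zero does not work. The arithmetic genus count only bounds the total number of singular points over $\C$; it cannot produce a real singularity. And the ``standard argument'' you invoke (that an extreme psd non-sos sextic automatically has ten real zeros) is false---there exist extreme psd non-sos sextics with exactly nine real zeros. The correct reasoning is the minimality of $s$: once $s>\max_P t(P)$, every $P\in S$ is a strict acnode of $q_s$, so $q_s$ is locally strictly psd (apart from the isolated zero) near each $P$. If $q_s$ had no further real zero, then $q_{s-\epsilon}=q_s-\epsilon f^2$ would still be psd for small $\epsilon>0$, contradicting the minimality of $s$. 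This gives the tenth real zero directly. The paper also identifies why the inequality is strict generically: at $t=t(P)$ the singularity is generically an ordinary cusp, which is locally sign-changing, so $q_{t(P)}$ is indefinite and hence $s>t(P)$.

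\medskip

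For part (d) your genus count is fine in the generic (ten-zero) case. In the non-generic case you have not shown that the nine real singularities carry total $\delta$-invariant $\ge 10$; the paper observes that when $|V_\R(q_S)|=9$ one of the points of $S$ is an $A_3$-singularity ($\delta=2$), which is what makes the count go through.
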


\begin{proof}
(a)
Let $x,\,y$ be local affine coordinates centered at $P$ such
that $X=V(f)$ has tangent $x=0$ at $P$. Fix a linear form $l$ with
$l(P)\ne0$ such that $f/l^3=x+\text{(higher}$ order terms) at $P$.
Then $q/l^6=ax^2+bxy+cy^2+\text{(higher}$ order terms) at $P$ with
suitable $a,b,c\in\R$, and $c>0$ since $f$ and $q$ have no common
tangent at~$P$ and $q\ge0$ on $X(\R)$. Therefore $t(P)$ is determined
by the condition $b^2-4c(a+t(P))=0$. The singularity $P$ of $q_t$ is
a node for $t<t(P)$ and an acnode for $t>t(P)$.
Therefore $q_t$ is indefinite for $t<t(P)$, proving (a) and~(b).
Generically, $P$ will be a cusp of the sextic $q_{t(P)}$, and thus
$q_{t(P)}$ will be indefinite, showing $s>\max\{t(P)\colon P\in S\}$.
Therefore $q_s=q_S$ must have an additional tenth real zero in this
case, proving~(c). In the non-generic case $|V_\R(q_S)|=9$, one of
the points $P\in S$ is an $A_3$-singularity (with complex conjugate
branches) of $q_S$. In either case it is clear that $q_S$ is a
rational curve.
\end{proof}

Note that assertion (d) was already known from results in
\cite{bhors}, see Remark \ref{rationalsexticsrems}.2 below.

\begin{rem}\Label{find10thzero}%
Let $S$ be an admissible set. We describe an effective algorithm to
(precisely) compute the extreme sextic $q_S$, and possibly its 10th
zero. Consider the assumptions of Proposition \ref{10thzero}. For
almost all values of $t\in\C$, the sextic $q_t$ has a complex node at
each point of $S$, and no other singularities. Let $\scrE$ be the
finite set of numbers $t\in\C$ for which $q_t$ has a higher
singularity at some point of $S$, or a singularity outside $S$. The
set $\scrE$ can be found explicitly via elimination theory, e.g.\
using a computer algebra system. Clearly, $\scrE$ contains the
numbers $s$ and $t(P)$ from \ref{10thzero}. In fact, it is easy to
see that $s$ is precisely the largest real number in $\scrE$. Indeed,
this follows from \ref{10thzero}(b), since for real $t>s$ the sextic
$q_t$ is irreducible and cannot have an additional singularity
$P\notin S$ (necessarily $P$ would have to be real).
\end{rem}

\begin{rem}
``Hilbert's method'' for constructing psd non-sos sextics, as
formalized in \cite{rz07}, starts with a set $T\subset\P^2(\R)$ of
$8$ points such that the pencil $I_3(T)$ has a 9th base point
$E\notin T$. Choose a basis $f_1,\,f_2$ of $I_3(T)$ and a sextic
$g\in I_6(2T)$ with $g\notin I_3(T)^2$, i.e.\ $g(E)\ne0$ (c.f.\
\ref{rappel}). Then for large $t>0$, the sextic $q=g+t(f_1^2+f_2^2)$
is psd and not sos. Generically, $q$ has a 9th real zero $P$, but
there is no control of $P$, and $q$ is usually not extreme, not even
when $t$ is chosen minimally.
\end{rem}

We now consider collections of 8 points in the plane.

\begin{thm}\Label{8ptsexpsdnonsos}%
Let $T\subset\P^2(\R)$ be any set with $|T|=8$ and no 4 points on a
line, no 7 on a conic. Then there exists a psd non-sos sextic that
vanishes in $T$.
\end{thm}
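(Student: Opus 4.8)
The plan is to produce an admissible ninth point $Q$ and invoke Corollary~\ref{admiffexpsdnonsossext}. By Proposition~\ref{rappel}(a), $\dim I_3(T)=2$, so there is a pencil of cubics through $T$, and a generic member $X$ of this pencil is nonsingular; in particular $X(\R)$ is Zariski dense in $X$ and $T\subset X_\reg(\R)$. Fix such an $X=V(f)$. We know the unique definite nonzero $2$-torsion class $\tau\in{}_2\Pic(X)$ exists (Proposition~\ref{def2torscubic}), hence $\tau_X=\tau+[\scrO_X(3)]$ is defined. The group law on $X_\reg(\R)$ lets us solve $[Q]=\tau_X-\sum_{P\in T}[P]$ for a unique point $Q\in X_\reg(\R)$. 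If $Q\notin T$, then $S:=T\cup\{Q\}$ has $|S|=9$ and represents $\tau_X$ on $X$, so $S$ is admissible and we are done.

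The difficulty is the degenerate case $Q\in T$, say $Q=P_0$. Here the naive choice fails, and the remedy is to vary the cubic: we want to show that for some other member $X'$ of the pencil the corresponding point $Q'$ does not already lie in $T$. So I would study how $Q$ depends on $X$. As $X=V(\lambda f_1+\mu f_2)$ runs over the pencil (minus the finitely many singular members), the assignment $X\mapsto \tau_X$, and hence $X\mapsto Q(X)$, varies algebraically. If $Q(X)$ were equal to a fixed point $P_i\in T$ for all $X$ in the pencil, then $P_i$ would be a base point contributing, together with the eight points of $T$, to the relation $2\sum_{P\in T}[P]+2[P_i]\sim[\scrO_X(6)]$ simultaneously on every cubic of the pencil; one then derives a contradiction with the hypothesis (no $4$ of the nine points on a line, no $7$ on a conic, which holds automatically once $|S|=9$ and $S$ lies on a unique irreducible cubic, cf.\ the argument in \ref{anycubic}). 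More precisely, $Q(X)\in T$ can hold only for finitely many $X$ in the pencil, because the locus where $Q(X)$ meets the finite set $T$ is a proper closed subset of the parameter line (it is not everything, by the previous sentence). Hence a generic nonsingular $X$ in the pencil yields $Q(X)\notin T$, and $S=T\cup\{Q(X)\}$ is admissible.

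The main obstacle is making the dependence $X\mapsto Q(X)$ precise enough to conclude it is non-constant on each $P_i$. The cleanest route is probably the geometric description in Remark~\ref{admissgeom}.2: for a nonsingular cubic $X$ through $T$, pick $P\in T$, let $X'$ be another cubic through $T\setminus\{P\}$, let $M$ be the ninth intersection point of $X$ and $X'$, let $N$ be the third point of $t_M\cap X$, and let $Q$ be the point where the unique tangent $t\ne t_M$ through $N$ with $t_Mt$ semidefinite touches $X$. All of these constructions are algebraic in the pencil parameter, and one checks that $M$, and therefore $N$ and $Q$, genuinely move with $X$; since the eight points of $T$ are fixed, $Q$ cannot coincide with any one of them for a generic choice of $X$. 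Once $Q\notin T$ is secured, admissibility of $S=T\cup\{Q\}$ is immediate from the construction of $Q$ and Definition~\ref{dfnadm}, and the existence of the desired psd non-sos sextic follows from Corollary~\ref{admiffexpsdnonsossext}.
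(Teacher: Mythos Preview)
Your approach has a genuine gap at the crucial step. You want to handle the degenerate case $Q\in T$ by varying $X$ inside the pencil $I_3(T)$ until the corresponding point $Q(X)$ falls outside $T$. But you never actually prove this is possible; the sentence ``one checks that $M$, and therefore $N$ and $Q$, genuinely move with $X$'' is both unproved and, as stated, partly false. In the construction of Remark~\ref{admissgeom}.2, the point $M$ is the ninth intersection of two cubics through $T$, i.e.\ the ninth base point $E$ of the pencil $I_3(T)$, and this point does \emph{not} move as $X$ varies. What moves is the tangent $t_M$ to $X$ at the fixed point $E$, and hence $N$ and $Q$; but you give no argument that $Q(X)$ is not constantly (or at least not always) equal to some $P_i\in T$, and your ``derives a contradiction'' clause is just a placeholder. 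In fact, the paper explicitly records in the remark immediately following this theorem that the authors \emph{do not know} whether every such $T$ admits a nonsingular $X$ with $Q_X\notin T$. You are therefore assuming precisely what the paper leaves open.

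The paper's proof avoids this issue entirely. Fixing one nonsingular $X=V(f)$ and the point $Q\in T$ with $[Q+\sum_{P\in T}P]=\tau_X$, it sets $T'=T\setminus\{Q\}$ and $D=\sum_{P\in T'}P$, so that $[D+2Q]=\tau_X$. Then there is a sextic $g$ prime to $f$ with $\div_X(g)=2D+4Q$; by definiteness $g\ge0$ on $X(\R)$, and Lemma~\ref{singularize0} (applied with $|T|=8$) lets one arrange $g\in I_6(2T)$. Since $Q\ne E$ one has $g\notin I_3(T)^2$, so no modification of $g$ by an element of $I_3(T)^2$ can be sos. Finally one adds a multiple of $f^2$ (for the points of $T'$), then $f'^2$ (to drop $i_Q(f,\,\cdot\,)$ from $4$ to $2$), then another multiple of $f^2$, and applies Lemmas~\ref{makelocalpsd} and~\ref{lem1} to obtain a psd form. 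The key idea you are missing is that the degenerate case is handled not by moving $Q$ but by absorbing it as a higher-order zero along~$X$.
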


This extends results by Reznick. He used Hilbert's construction to
prove the assertion when $T$ is ``copacetic'' (meaning that the 9th
base point of $I_3(T)$ lies outside $T$), and derived from this the
unconditional assertion for $|T|=7$. See \cite{rz07}, Corollaries 4.2
and 4.4. Our proof doesn't use these results.

\begin{proof}
Let $I_3(T)$ be spanned by the nonsingular cubics $f$ and $f'$, and
let $E$ be their 9th intersection point (which may lie in $T$). All
divisors will be formed on the curve $X=V(f)$. Let $Q\in X(\R)$ be
the unique point with $[Q+\sum_{P\in T}P]=\tau_X$. When $Q\notin T$
then $T\cup\{Q\}$ is admissible, see Remark \ref{admissgeom}.2.
Hence, in this case, there exists a psd non-sos sextic vanishing even
in $T\cup\{Q\}$.

So we assume $Q\in T$ and write $T'=T\setminus\{Q\}$. Note that
$Q\ne E$.
Let $D=\sum_{P\in T'}P\in\Div(X)$. We have $\div(f')=
D+Q+E$ and $[D+2Q]=\tau_X$. There exists a sextic form $g$, prime to
$f$, with $\div(g)=2(D+2Q)=2D+4Q$, and we have $g\ge0$ on $X(\R)$ by
the definiteness condition on $\tau_X$. By Lemma \ref{singularize0}
there exists a cubic form $h$ such that $g+fh$ is singular in every
point of~$T$. Replacing $g$ with $g+fh$ we can assume $g\in I_6(2T)$.
The divisor $\div(g)$ shows that $g\notin I_3(T)^2$.
In particular, adding to $g$ a form in $I_3(T)^2$ to $g$ cannot turn
$g$ into a sum of squares.

Since $f$ and $g$ have local intersection index~$2$ at every point of
$T'=T\setminus\{Q\}$, there exists $c>0$ such that $g':=g+cf^2$ is
locally psd around every point of $T'$, by Lemma \ref{makelocalpsd}.
For the form $g'':=g'+f'^2$, the intersection index with $f$ at the
point $Q$ drops to~$2$.
Now we can apply Lemma \ref{makelocalpsd} at the point $Q$, and then
apply Lemma \ref{lem1}, to conclude that the sextic $g''+tf^2$ is psd
for sufficiently large $t>0$.
\end{proof}

\begin{cor}
Let $T\subset\P^2(\R)$ be any set with $|T|=8$, having no 4 points on
a line, no 7 on a conic. The cone $P_6(T)$ in $I_6(2T)$ has full
dimension four, while $\Sigma_6(T)=P_6(T)\cap I_3(T)^2$ has dimension
three.
\qed
\end{cor}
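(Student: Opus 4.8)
The plan is to produce, for an arbitrary admissible-type configuration of $8$ points, a ninth point $Q$ on a cubic through $T$ so that $T\cup\{Q\}$ becomes admissible, and then invoke Corollary \ref{admiffexpsdnonsossext}; the only genuine difficulty arises when the forced ninth point $Q$ happens to already lie in $T$. So I would first fix a basis $f,f'$ of the pencil $I_3(T)$, which by \ref{rappel}(a) is two-dimensional with generic member nonsingular, and arrange $f,f'$ both nonsingular. All divisor computations will be carried out on the fixed smooth cubic $X=V(f)$, where $\Pic(X)$ behaves classically and $\tau_X$ is available by Notation \ref{dfntaux}. Let $E$ be the ninth base point of the pencil (possibly in $T$), so $\div_X(f')=\sum_{P\in T}P+E$. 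By Remark \ref{admissgeom}.2 there is a unique $Q\in X_\reg(\R)$ with $[Q+\sum_{P\in T}P]=\tau_X$, and if $Q\notin T$ then $T\cup\{Q\}$ is admissible and we are done immediately by \ref{admiffexpsdnonsossext}.

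The heart of the argument is the remaining case $Q\in T$. Here I would set $T'=T\setminus\{Q\}$, $D=\sum_{P\in T'}P$, so that $[D+2Q]=\tau_X$ while $\div_X(f')=D+Q+E$. Since $[D+2Q]$ is a definite class, after scaling there is a sextic form $g$, prime to $f$, with $\div_X(g)=2(D+2Q)=2D+4Q$ and $g\ge0$ on $X(\R)$; one should note $Q\ne E$ (else $f'$ would witness $Q\sim E$ forcing $[D+2Q]=[\scrO_X(3)]$, contradicting $2$-torsion nontriviality). Next, since $I_3(T)$ has dimension $2$, Lemma \ref{singularize0} applies to replace $g$ by $g+fh$ so that $g$ becomes singular at every point of $T$; the divisor relation $\div_X(g)=2D+4Q$ persists up to the $\div_X$ of the cubic correction, and in any case $g\notin I_3(T)^2$ because on $X$ the class of $g$ is $2\tau_X\ne[\scrO_X(6)]$ — so no addition of a form from $I_3(T)^2$ can make $g$ sos. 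The point $E$ never enters the zero set of $g$ on $X$, which is exactly what prevents $g$ from lying in $I_3(T)^2$.

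Having $g\in I_6(2T)$, psd on $X(\R)$, not sos, I would make it globally psd by two compactness-type corrections. At each point $P\in T'$ the local intersection number $i_P(f,g)=2$ (the tangent of $X$ is not tangent to $V(g)$, since the restriction of $g$ to $X$ vanishes to order exactly $2$ there), so Lemma \ref{makelocalpsd} gives $c>0$ with $g'=g+cf^2$ locally psd around every point of $T'$; at $Q$, however, $g$ vanishes to order $4$ on $X$, so I add the square $f'^2$ — since $\div_X(f')=D+Q+E$ has a simple zero at $Q$, the form $g''=g'+f'^2$ has $i_Q(f,g'')=2$, making Lemma \ref{makelocalpsd} applicable at $Q$ as well. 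Then Lemma \ref{lem1} yields $t>0$ with $g''+tf^2$ psd on all of $\P^2(\R)$, and this sextic is still not sos (we only added multiples of $f^2$ and one square $f'^2$ to a form whose $X$-divisor is $2\tau_X$; more precisely $g''+tf^2$ restricted to $X$ is not identically zero and not twice a $\scrO_X(3)$-class). The step I expect to be the main obstacle is verifying cleanly that the corrections at $Q$ — adding $f'^2$ — do not accidentally make the result a sum of squares and do not destroy the local behaviour elsewhere; this requires tracking the divisor class on $X$ through each modification and checking that $g''+tf^2$ still fails to be sos, for which the key point is that being sos and vanishing on $T$ would force lying in $I_3(T)^2$, i.e.\ having the right divisor on $X$, which $g$ was arranged not to have.
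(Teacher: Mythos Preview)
Your proposal is a proof of Theorem~\ref{8ptsexpsdnonsos}, not of the Corollary that was asked for. As a proof of Theorem~\ref{8ptsexpsdnonsos} it is essentially identical to the paper's own argument: pick a smooth $X=V(f)$ in the pencil $I_3(T)$, find the unique $Q$ with $[Q+\sum_{P\in T}P]=\tau_X$, finish via admissibility if $Q\notin T$, and in the degenerate case $Q\in T$ build the sextic with $\div_X(g)=2D+4Q$, singularize with Lemma~\ref{singularize0}, correct locally with Lemma~\ref{makelocalpsd} (using $f'^2$ to drop the intersection index at $Q$ from $4$ to $2$), and globalize with Lemma~\ref{lem1}. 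This is line-for-line the paper's proof.

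What is missing is the (short) deduction of the Corollary itself. The paper's proof of the Corollary simply quotes Theorem~\ref{8ptsexpsdnonsos} together with Proposition~\ref{rappel}: by \ref{rappel}(b) one has $\dim I_6(2T)=4$ and $\dim I_3(T)^2=3$; by \ref{rappel}(c) the sos sextics in $P_6(T)$ are exactly $P_6(T)\cap I_3(T)^2$, and squares of elements of $I_3(T)$ already span a $3$-dimensional cone there; finally Theorem~\ref{8ptsexpsdnonsos} supplies a psd form in $I_6(2T)\setminus I_3(T)^2$, so $P_6(T)$ is full-dimensional in $I_6(2T)$. You should add this paragraph.

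One small slip to fix: the claim ``on $X$ the class of $g$ is $2\tau_X\ne[\scrO_X(6)]$'' is false, since $\tau_X-[\scrO_X(3)]$ is $2$-torsion and hence $2\tau_X=[\scrO_X(6)]$ always. The correct reason $g\notin I_3(T)^2$ (which you also state) is at the level of the \emph{effective divisor} on $X$, not its class: any nonzero restriction from $I_3(T)^2$ has $\div_X=2(D+Q+E)$, whereas $\div_X(g)=2D+4Q$, and these differ because $Q\ne E$.
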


\begin{proof}
Follows from Theorem \ref{8ptsexpsdnonsos} and Proposition
\ref{rappel}(c).
\end{proof}

\begin{rem}
For most sets $T\subset\P^2(\R)$ of 8 points (no 4 on a line, no 7 on
a conic) we have seen that there exists a non-sos psd sextic $q$ that
vanishes on $T$ plus an extra point outside~$T$. It is clear that
such $q$ exists if and only if $T$ has the following property: There
exists a nonsingular cubic $X$ through $T$ for which the (unique)
point $Q_X\in X(\R)$ with $[Q_X+\sum_{P\in T}P]=\tau_X$ satisfies
$Q_X\notin T$. We do not know if there exists a set $T$ for which
this property fails.
\end{rem}

\begin{rems}\Label{rationalsexticsrems}%
\hfil\smallskip

1.\
The psd sextic $g$ is said to be \emph{exposed} if
any $g'\in P_6$ with $V_\R(g)\subset V_\R(g')$ satisfies $g'\in
\R_\plus g$. Clearly such $g$ spans an extreme ray of $P_6$. The
converse is not true in general, however Straszewicz's theorem (e.g.\
\cite{ro} Theorem 18.6)
shows that every extreme form is the limit of a sequence of exposed
forms.
It is easy to see that a form $g\in P_6\setminus\Sigma_6$ is exposed
if and only if $|V_\R(g)|=10$. Indeed, when $|V_\R(g)|\le9$ there is
a cubic $p$ through $V_\R(g)$, so $p^2$ is another psd sextic
vanishing on $V_\R(g)$. Conversely, if $|V_\R(g)|=10$, and if $g'$ is
another psd sextic with $V_\R(g)\subset V_\R(g')$, then $g,\,g'$ have
intersection number $\ge10\cdot2^2=40$, implying $g'\in\R_\plus g$
since $g$ is irreducible (see \cite{rz07} Theorem 7.2 for this
argument).
\smallskip

2.\
Since an absolutely irreducible plane sextic can have at most 10
singular points, the preceding remark shows that any exposed and
non-sos psd sextic defines a rational curve. Therefore, and by
Straszewicz's theorem, every extreme form in $P_6\setminus\Sigma_6$
defines a rational sextic. This argument was used in \cite{bhors} to
show that the Zariski closure in $|\scrO_{\P^2}(6)|$ of the extreme
curves in $P_6\setminus\Sigma_6$ is contained in the Severi variety
$S_{6,0}$ of plane rational sextics.
In fact, the authors proved (loc.\,cit.\ Theorem~2) that the Zariski
closure is equal to $S_{6,0}$, by observing that $S_{6,0}$ is irreducible
of dimension~17, and by producing a 17-dimensional local family of
extreme forms in $P_{3,6}\setminus\Sigma_{3,6}$, based on an analysis
from \cite{rz07}.
See also Remark \ref{dimrem} below.
\smallskip

3.\
Proposition \ref{10thzero} shows that for a generically chosen
admissible set $S$, the extreme sextic $q_S$ has a tenth real zero,
and is therefore exposed. Conversely, it follows from Corollary
\ref{zsfgpsdsex} that every exposed sextic $q$ is covered by
construction \ref{sumcomp}. Actually this happens in ten different
ways, since there are ten ways of choosing nine points out of ten.
\end{rems}

\begin{rem}\Label{dimrem}%
Corollary \ref{admiffexpsdnonsossext} gives a very explicit argument
for the fact that the set of extreme forms in $P_{3,6}\setminus
\Sigma_{3,6}$ has (projective) dimension~$17$.
Indeed, fixing a plane cubic curve $X$ with at most nodes as
singularities, there is an 8-parameter family of admissible subsets
$S$ of $X_\reg(\R)$. Adding the 9 parameters for choosing $X$, this
together shows that the admissible sets form a $17$-dimensional
family. The map assigning to each admissible $S$ the extreme psd
sextic $q_S$ is an explicit parametrization of the extreme sextics
in $P_6\setminus\Sigma_6$ with at least $9$ real zeros. Generically,
this parametrization is a $10:1$-map.
\end{rem}

We finally discuss a geometric way to find the 10th zero of the
extreme psd sextic $q_S$, for $S\subset\P^2(\R)$ admissible. The
following construction is extracted from Coble's remarkable paper
\cite{cob}. We work over $\C$ in the next proposition.

\begin{prop}\Label{nonic}%
\emph{(Coble)}
Let $T\subset\P^2$ be a set with $|T|=8$, no $4$ points on a line
and no $7$ on a conic. Choose cubic forms $f,\,f'$ with $I_3(T)=
\spn(f,f')$ and a sextic form $g$ with $I_6(2T)=\spn(f^2,\,ff',\,
f'^2,\,g)$ (c.f.\ \ref{rappel}). Let $M$ be the 9th point of
intersection of $f$ and $f'$, and let $j=\det J(f,f',g)$, the
Jacobian determinant.
\begin{itemize}
\item[(a)]
The curve $N_T=V(j)$ has degree~$9$ and depends only on $T$.
\item[(b)]
Every point of $T$ is a triple point of $N_T$.
\item[(c)]
Assume that the cubic $X=V(f)$ is nonsingular. Then for $Q\in X$,
$Q\notin T\cup\{M\}$ we have
$$\dim I_6(2T+2Q)>1\ \iff\ 2Q\sim2M\text{ on }X\ \iff\ Q\in N_T.$$
Moreover $M\notin N_T$ except when $M\in T$.
\end{itemize}
\end{prop}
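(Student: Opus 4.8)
The plan is to prove the three parts of Proposition~\ref{nonic} essentially by careful bookkeeping with the Jacobian and a divisor-class computation on the cubic $X=V(f)$.

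For part~(a), I would first observe that $j=\det J(f,f',g)$ is a form of degree $(3-1)+(3-1)+(6-1)=9$ since each row of the Jacobian matrix consists of partial derivatives, lowering the degree by one. To see that $N_T=V(j)$ depends only on $T$ and not on the choice of basis $f,f'$ of $I_3(T)$ or the choice of $g$, note that any other admissible triple $(\tilde f,\tilde f',\tilde g)$ is obtained from $(f,f',g)$ by a linear change: $\tilde f,\tilde f'$ span $I_3(T)$ and $\tilde g$ differs from $g$ by an element of $I_3(T)^2=\spn(f^2,ff',f'^2)$ together with scaling. By the chain rule, $J(\tilde f,\tilde f',\tilde g)=J(f,f',g)\cdot \partial(\tilde f,\tilde f',\tilde g)/\partial(f,f',g)$ wherever this makes sense; the substitution matrix has constant determinant on the $f$-$f'$ block (a nonzero scalar from $GL_2$), while adding a quadratic combination of $f,f'$ to $g$ changes $\tilde g_{x_i}$ only by an $\R[\x]$-linear combination of $f_{x_i}$ and $f'_{x_i}$, hence adds to the third row of the matrix a combination of the first two rows and leaves $j$ unchanged up to the scalar. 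Thus $V(j)$ is well-defined.

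For part~(b), I would work locally at a point $P\in T$. Since $f,f'\in I_3(T)$, both vanish to order at least~$1$ at $P$, so their first partials need not vanish; but $g\in I_6(2T)$ vanishes to order~$2$, so all three partials $g_{x_i}$ vanish at $P$. That already shows $j(P)=0$. To get multiplicity~$3$, expand in local affine coordinates $x,y$ centered at $P$: write $f=\ell_1+(\text{h.o.t.})$, $f'=\ell_2+(\text{h.o.t.})$ with $\ell_1,\ell_2$ the (generically independent) linear parts, and $g=Q(x,y)+(\text{h.o.t.})$ with $Q$ a quadratic form. The Jacobian matrix at a nearby point has its first two rows with nonzero constant terms and its third row vanishing to order~$1$; a direct expansion of the $3\times3$ determinant shows every monomial of $j$ near $P$ has total degree at least~$2$ in $(x,y)$ coming from the third row, plus... here I need to be more careful — the right statement is that the $2\times2$ minors of the first two rows are nonzero constants, so $j$ near $P$ equals (constant)$\times$(a first-partial of $g$) plus terms that vanish to higher order, and each $g_{x_i}$ vanishes to order~$2$ at $P$ since $g$ vanishes to order~$3$... wait, $g$ vanishes to order~$2$, so $g_{x_i}$ vanishes to order~$1$, giving multiplicity only~$2$. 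To recover the claimed triple point I would instead use that $f$ and $f'$ both vanish at $P$, so in fact two of the three columns of dependence force extra cancellation; concretely, in the identity $x_0 f_{x_0}+x_1 f_{x_1}+x_2 f_{x_2}=3f$ (Euler) and similarly for $f',g$, one can show the leading terms of $j$ at $P$ start in degree~$3$. This is the step I expect to be the main obstacle, and I would settle it by an explicit local normal-form computation, possibly citing Coble~\cite{cob} directly.

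For part~(c), I would use the theory of the associated net of cubics through $T$. The key fact is the classical one that for $Q\notin T\cup\{M\}$, $\dim I_3(T\cup\{Q\})\ge1$ always (it equals~$2$ minus a condition), and the pencil $I_3(T)$ has $M$ as its ninth base point; a sextic in $I_6(2T)$ vanishes doubly at $Q$ precisely when the quadratic form $g\bmod I_3(T)^2$, evaluated via the local behaviour at $Q$, degenerates, which by the divisor-class bookkeeping on $X=V(f)$ (using $\div_X(f')=\sum_{P\in T}P+M$ and $\div_X(g)=2\sum_{P\in T}P+\,$correction) translates to $2Q\sim 2M$ on $X$. I would make this precise by noting that $\dim I_6(2T+2Q)>1$ means there is a sextic $h$ singular at all of $T$ and at $Q$ not a combination of $f^2$; intersecting $V(h)$ with $X$ gives a divisor $\ge 2\sum_{P\in T}P+2Q$ of degree $18$, hence exactly $2\sum_{P\in T}P+2Q$, forcing $2Q\sim 6L-2\sum_{P\in T}P\sim 2(M)$ (since $\sum_{P\in T}P+M\sim 3L$), i.e. $2Q\sim2M$. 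Conversely $2Q\sim2M$ produces such an $h$ by the same argument that proved Lemma~\ref{singularize0}. The equivalence with $Q\in N_T$ then follows because $j$ restricted to $X$ vanishes exactly where the curve $V(g_t)$ acquires an extra singularity, which by part (b) and a dimension count is a degree-$9$ divisor on $X$ accounted for by the triple points at $T$ ($3\cdot 8=24$... no, $9\cdot 3=27$, minus $3\cdot 8=24$, leaves $3$) — so $j|_X$ has $3$ further zeros, and these are exactly the solutions of $2Q\sim2M$ other than... I would pin this down by identifying $j|_X$ up to scalar with a section cutting out $3M$ plus the triple locus, giving $Q\in N_T\iff 2Q\sim2M$. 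Finally, $M\notin N_T$ unless $M\in T$: if $M\notin T$ then at $M$ the forms $f,f'$ both vanish but $g$ need not, so the third row of the Jacobian at $M$ is generically nonzero while the first two rows vanish at $M$ (as $f,f'$ vanish there), and a quick local check shows $j(M)\ne0$; if $M\in T$ then $M$ is a triple point of $N_T$ by~(b).
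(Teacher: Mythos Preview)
Your treatment of (a) is correct and matches the paper's. For (b) your hesitation is justified: the naive row-by-row order count only gives multiplicity $\ge2$, and one does need the extra cancellation you suspect. It comes cleanly from the Euler identity: after the column operation $C_0\mapsto x_0C_0+x_1C_1+x_2C_2$ one gets $x_0\,j=\det\bigl((3f,3f',6g)^T\,\big|\,\partial_1(\cdot)\,\big|\,\partial_2(\cdot)\bigr)$, and then the order-$2$ part of the expansion along the first column cancels identically. The paper also just says ``readily seen in local coordinates'', so this is not where your proposal diverges.

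Part (c) is where there are genuine gaps. First, the B\'ezout bookkeeping you sketch for $Q\in N_T\iff 2Q\sim2M$ does not prove an equivalence: knowing that $N_T\cap X$ has $27$ points with $24$ accounted for by $T$ and that there are exactly three $Q\ne M$ with $2Q\sim2M$ only says the two sets have the same cardinality; you still need one inclusion (and you also need $f\nmid j$, which you have not checked). The paper instead argues the implication $j(Q)=0\Rightarrow\dim I_6(2T+2Q)>1$ directly. Since $f(Q)=0$, $f'(Q)\ne0$ and $X$ is smooth, Euler's identity forces $\nabla f(Q)$ and $\nabla f'(Q)$ to be linearly independent, so $j(Q)=0$ means $\nabla g(Q)\in\spn(\nabla f(Q),\nabla f'(Q))$. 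Replacing $g$ by $g-\frac{g(Q)}{f'(Q)^2}f'^2$ (an element of $I_6(2T)$ with the same $j$) one reduces to $g(Q)=0$, and then $\nabla g(Q)$ is parallel to $\nabla f(Q)$, i.e.\ $i_Q(f,g)\ge2$; now Lemma~\ref{singularize0} applied to the nine points $T\cup\{Q\}$ produces a sextic in $I_6(2T+2Q)$ not in $\R f^2$. This is the missing mechanism in your sketch.

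Second, your argument for $M\notin N_T$ is incorrect: you write that ``the first two rows vanish at $M$ (as $f,f'$ vanish there)'', but the rows of $J(f,f',g)$ are the \emph{gradients}, not the values, and $\nabla f(M)\ne0$ because $X$ is nonsingular. The actual reason is the opposite of what you wrote: since $M\notin T$, the cubics $f,f'$ meet transversally at $M$, so $\nabla f(M),\nabla f'(M)$ are independent and (by Euler, since $f(M)=f'(M)=0$) span $M^\perp$; on the other hand $g(M)\ne0$ (otherwise a divisor computation on $X$ forces $\div_X(g)=\div_X(f'^2)$ and hence $g\in I_3(T)^2$), so again by Euler $\nabla g(M)\notin M^\perp$, giving $j(M)\ne0$.
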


Note that for $X=V(f)$ nonsingular, (c) gives $27$ intersection
points of $X$ and $N_T$, at least in the generic case $M\notin T$,
namely the three points $Q\ne M$ with $2Q\sim2M$, together with the
eight points in $T$ where the intersection index is~$3$.

\begin{proof}
(a) follows from elementary properties of Jacobians, and (b) is
readily seen in local coordinates.
For (c) assume that $X=V(f)$ is nonsingular, and let $Q\in X$,
$Q\notin T\cup\{M\}$. When $h$ is a sextic that is singular in
$T\cup\{Q\}$ and not a multiple of $f^2$, then $\gcd(f,h)=1$ and
$\div_X(h)=2Q+2\sum_{P\in T}P=2(Q-M)+\div_X(f')$, hence $2Q\sim2M$
on $X$. On the other hand, $2Q\sim2M$ implies the existence of a
sextic $h\in I_6(2T)$ with $\gcd(f,h)=1$ and $i_Q(f,h)\ge2$,
hence $J(f,f',h)$ is singular at $Q$.
Conversely let $j(Q)=0$. Since $f'(Q)\ne0$, this means that
$i_Q(f,g)\ge2$, and arguing as in \ref{singularize0} we find a cubic
$p$ such that $g+pf$ is singular in $T\cup\{Q\}$, showing
$\dim I_6(2T+2Q)>1$. When $M\notin T$ then $f$ and $f'$ meet
transversally at $M$, so $j(M)\ne0$.
\end{proof}

\begin{cor}\Label{9thzerononic}%
Let $S\subset\P^2(\R)$ be an admissible set of nine points, let
$P\in S$, and put $T=S\setminus\{P\}$. Then $P$ lies on the nonic
curve $N_T$ (\ref{nonic}).
\end{cor}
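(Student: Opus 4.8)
The plan is to reduce the statement to part~(c) of Proposition~\ref{nonic}. Since $S$ is admissible with associated cubic $X=V(f)$, I first observe that the nonic $N_T$ attached to $T=S\setminus\{P\}$ is defined over $\C$ via the Jacobian construction, but all the data ($f$, $f'$, $g$, hence $j=\det J(f,f',g)$) can be chosen with real coefficients once $T\subset\P^2(\R)$; so $N_T$ is a real curve and it suffices to check $P\in N_T(\C)=V(j)$. The point $P$ is, by Proposition~\ref{propsadm}(b), a nonsingular real point of $X$, and by Remark~\ref{admissgeom}.2 the cubic $X$ is either nonsingular or has a node. I would treat the nonsingular case first, as this is exactly the hypothesis of \ref{nonic}(c).

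So assume $X=V(f)$ is nonsingular. Applying \ref{nonic} to the $8$-point set $T$, I choose $f,f'$ with $I_3(T)=\spn(f,f')$ — note $f\in I_3(T)$ since $f$ vanishes on $S\supset T$ — and $M$ the $9$th intersection point of $f$ and $f'$. Let $D'=\sum_{Q\in T}Q\in\Div(X)$, so $\div_X(f')=D'+M$ and hence $D'\sim 3L-M$. On the other hand $S$ admissible means $[P]+[D']=\tau_X=\tau+[\scrO_X(3)]$ with $\tau$ the definite nonzero $2$-torsion class, so $2[P]+2[D']\sim 6L$, i.e. $2[P]\sim 6L-2D'\sim 2M$. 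Thus $2P\sim 2M$ on $X$. Now I need $P\notin T\cup\{M\}$: clearly $P\notin T$ since $|S|=9$; and $P\ne M$ because $[P]+[D']=\tau_X\not\sim[\scrO_X(3)]=[M]+[D']$ (as $\tau\ne 0$). Hence \ref{nonic}(c) applies with $Q=P$ and gives $P\in N_T=V(j)$, which is the claim.

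It remains to handle the case where $X$ has a node; here \ref{nonic}(c) is not directly available since it assumes $X$ nonsingular, and this is the main obstacle. One route is to note that the $8$-point set $T$ still has no $4$ on a line and no $7$ on a conic (by \ref{propsadm}(c), since $S$ does), so by \ref{rappel}(a) almost every cubic in $I_3(T)$ is nonsingular; the nodal cubic $X=V(f)$ is then a special member of the pencil, but the nonic $N_T$ depends only on $T$ by \ref{nonic}(a). I would pick a nonsingular cubic $X_1=V(f_1)$ in $I_3(T)$ and argue directly: the condition $j(P)=0$ is equivalent, via the Jacobian criterion as in the proof of \ref{nonic}(c), to the existence of a sextic $h\in I_6(2T)$ with $\gcd(f_1,h)=1$ and $i_P(f_1,h)\ge 2$ — equivalently $\dim I_6(2T+2P)>1$. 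But $S=T\cup\{P\}$ is admissible, so by Theorem~\ref{sumcomp} $\dim I_6(2S)=2$, i.e. there is a sextic $q\notin\R f^2$ singular at every point of $S=T\cup\{P\}$; such $q$ certainly witnesses $\dim I_6(2T+2P)\ge 2$, and $\gcd(f_1,q)=1$ because $q\notin\R f_1^2$ would force $q\in\R f_1^2$ only if $f_1\mid q$, which together with $q$ singular on $8$ general points of $X_1$ is impossible by B\'ezout (exactly the dichotomy argument in the proof of \ref{sumcomp}). Hence $j(P)=0$ and $P\in N_T$. In fact this last argument is uniform and covers both cases at once, so in the writeup I would phrase it as: $\dim I_6(2S)=2$ by \ref{sumcomp} exhibits a sextic $h$ with $\gcd(f_1,h)=1$, $i_P(f_1,h)\ge 2$, and then the Jacobian $\det J(f_1,f',h)$ vanishes at $P$; since $V(\det J(f_1,f',h))=N_T$ by \ref{nonic}(a), we get $P\in N_T$.
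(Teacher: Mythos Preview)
Your proposal is correct, and your final ``uniform'' argument is essentially the paper's own proof, though you reach it by a detour. The paper proceeds directly: since $S$ is admissible, Theorem~\ref{sumcomp} gives a sextic $g\in I_6(2S)\setminus\R f^2$ (for instance $g=q_S$); such $g$ lies in $I_6(2T)\setminus I_3(T)^2$ (as $g$ is psd non-sos, hence not in $I_3(T)^2$ by \ref{rappel}(c)), so by \ref{nonic}(a) it may serve as the $g$ in the defining Jacobian $j=\det J(f,f',g)$; but $g$ is singular at $P$, i.e.\ $\nabla g(P)=0$, whence the third row of the Jacobian matrix vanishes at $P$ and $j(P)=0$.

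Your initial route through the criterion $2P\sim 2M$ of \ref{nonic}(c) is valid when $X$ is nonsingular, but---as you correctly observe---forces a separate argument for the nodal case, and that second argument already covers everything. Two small points on your write-up: (i) what you actually need to invoke \ref{nonic}(a) is $h\notin I_3(T)^2$, not $\gcd(f_1,h)=1$; the quickest justification is that $q_S$ psd and not sos rules out $q_S\in I_3(T)^2$ via \ref{rappel}(c); (ii) in the last line you should say ``$h$ is singular at $P$'' rather than ``$i_P(f_1,h)\ge2$'', since it is the vanishing of the full gradient $\nabla h(P)$ that kills the determinant row outright.
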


\begin{proof}
There exists a sextic $g\in I_6(2T)\setminus I_3(T)^2$ that is
singular in $P$, for example the extreme psd sextic $q_S$
(\ref{nontatgs}). Since $N_T$ has equation $j=\det J(f,f',g)=0$ where
$I_3(T)=\spn(f,f')$, it is clear that $j(P)=0$.
\end{proof}

\begin{rem}\Label{remfind10thzero}%
Let $S\subset\P^2(\R)$ be an admissible set, and assume that the
extreme psd sextic $q_S$ has a 10th real zero $Q\notin S$.
(Generically this is the case, see Proposition \ref{10thzero}.)
Corollary \ref{9thzerononic} offers a way of finding $Q$ (and
therefore $q_S$), at least up to a finite choice. Indeed, $Q$ lies
on the intersection $N_{S\setminus\{P\}}\cap N_{S\setminus\{P'\}}$,
for any choice of two points $P\ne P'$ in~$S$.
See Example \ref{beispielsymm} for an illustration.
\end{rem}

\begin{rem}\Label{char10ptsets}%
One may wonder about a characterization of the 10-point real zero
sets of psd sextics, or in other words, of the real zero sets of
exposed psd sextics. Such a characterization has been asked for in
\cite{clr80}. Let $U\subset\P^2(\R)$ be a set with $|U|=10$. If a
psd sextic $q$ with $V_\R(q)=U$ exists then
every 9-point subset of $U$ is admissible. Conversely assume that
$U\setminus\{P\}$ is admissible for every point $P\in U$. Does it
follow that there exists a psd sextic $q$ with $V_\R(q)=U$? This
seems likely, but we haven't been able to prove it. At any rate, our
results permit to check, for any given $U$, whether there is a psd
sextic with zeros in $U$ (and to find it in the positive case): Pick
$P\in U$, decide whether $S=U\setminus\{P\}$ is admissible, and if
yes, proceed to determine the extreme sextic $q_S$. If there exists a
psd sextic through $U$ it has to be $q_S$, so a necessary and
sufficient condition is that $q_S(P)=0$.
\end{rem}

%-------------------------------------------------------------------%

\section{Examples}

We are going to illustrate the general construction of admissible
sets in the plane and their associated extreme psd sextics, as well
as several phenomena discussed in the previous sections. For
typographical simplicity we sometimes use homogeneous coordinates
$(x,y,z)$ instead of $(x_0,x_1,x_2)$.

\begin{lab}\Label{bsp1}%
For a first example we work in affine coordinates $(x,y)$. Let
$X=V(f)$ be the elliptic curve with affine equation
$$f(x,y,1)\>=\>y^2-x^3-x^2-4.$$
Let $+$ denote the abelian group law on $X(\R)$ with neutral element
the point at infinity. The group $X(\R)$ has one connected component,
with real $2$-torsion point $Q=(-2,0)$. Let $P=(0,2)$, a non-torsion
point on $X$. The nine points $S=\{Q,\ \pm P,\ \pm2P,\ Q\pm P,\
Q\pm2P\}$ on $X$ add up to $Q$ in the group law of $X$. Therefore the
set $S$ is admissible.
Each of the four triples $(P,\,Q+P,\,Q-2P)$, $(-P,\,Q-P,\,Q+2P)$,
$(Q,2P,\,Q-2P)$, $(Q,-2P,Q+2P)$ lies on a line, and the six points
$(\pm P,\,\pm2P,\, Q\pm P)$ lie on a conic.
The product is a sextic that is singular in the points of $S$, namely
$q=l_1l_2l_3l_4p$ where $l_1=3x+y-2$, $l_2=3x-y-2$, $l_3=2x+y+4$,
$l_4=2x-y+4$ and $p=y^2-2x^2-2x-4$.
By construction, $q$ has constant (nonnegative) sign on $X(\R)$.
The minimal value $t\in\R$ for which the sextic $q_t=q+tf^2$ is psd
gives the extreme sextic $q_S=q_t$ in $P_6(S)\setminus\Sigma_6(S)$.
One finds that $q_S$ is exposed with 10th real zero at $(\alpha,0)$,
where $\alpha\approx4.25925$ is the real root of $5x^3-15x^2-24x-12$.
The corresponding value for $t$ is $t=\frac1{49}(165\alpha^2+
60\alpha+1156)\approx89.89509$.
\end{lab}

\begin{lab}\Label{bsp3}%
For a second example let $X=V(x_0x_1x_2)$, union of three lines in
general position.
Write $P_0(a)=(0:a:1)$, $P_1(a)=(1:0:a)$ and $P_2(a)=(a:1:0)$ for
$a\in\R^*$. The Picard group of $X$ is an extension $1\to\R^*\to
\Pic(X)\to\Z^3\to0$. Given $a_i,\,a'_i\in\R^*$ ($i=0,1,2$) we have
$$\sum_{i=0}^2P_i(a_i)\>\sim\>\sum_{i=0}^2P_i(a'_i)\text{ \ in }
\Div(X)\quad\iff\quad a_0a_1a_2\>=\>a'_0a'_1a'_2.$$
So if $a_0,\,a_1,\,a_2\in\R^*$, the three points $P_i(a_i)$
($i=0,1,2$) lie on a line if and only if $a_0a_1a_2=-1$.
Therefore, $S\subset X_\reg(\R)$ is admissible if and only if there
exist $a_i,\,b_i,\,c_i\in\R^*$ with $|\{a_i,b_i,c_i\}|=3$ ($i=0,1,2$)
such that
$$S\>=\>\Bigl\{P_i(a_i),\>P_i(b_i),\>P_i(c_i)\colon i=0,1,2\Bigr\}$$
and
$$a_0a_1a_2\cdot b_0b_1b_2\cdot c_0c_1c_2\>=\>1.$$
For an explicit example let
$$S\>=\>\Bigl\{P_i(\pm1)\colon i=0,1,2\Bigr\}\cup
\Bigl\{P_0\bigl(\frac32\bigr),\>P_1\bigl(\frac13\bigr),\>
P_2(-2)\Bigr\}.$$
A sextic that is singular in the points of $S$ is
$$q\>=\>(x_0+x_1+x_2)(x_0+x_1-x_2)(x_0-x_1+x_2)(x_0-x_1-x_2)
(x_0+2x_1-3x_2)^2$$
The extreme psd sextic associated to $S$ is $q_S=q+t(x_0x_1x_2)^2$
where $t\approx114.68148$.
Again, $q_S$ is exposed with 10th real zero $(\alpha:\beta:1)$ where
$\alpha\approx-0.64185$ is the smallest real root of
$27x^3-111x^2-59x+15$ and $\beta=\frac9{16}(1-\alpha^2)+2\alpha
\approx-0.95295$. (The above number $t$ is also a rational expression
in $\alpha$.)
\end{lab}

\begin{lab}\Label{bsp6}%
This is an example of an admissible set $S$ for which the extreme psd
sextic $q_S$ has only the 9 real zeros in $S$. Let $S$ consist of the
9 points
$$(0:1:\pm1),\ (1:0:\pm1),\ (1:\pm1:0),\ (3:0:2),\ (3:2:0),\
(5:4:4).$$
One can show that the set $S$ is admissible, the unique cubic
$X=V(f)$ through $S$ being given by
$$f\>=\>8\,(x+y+z)(2x^2+3y^2+3z^2-5xy-5xz-6yz)+195\,xyz.$$
$X$ is an elliptic curve for which $X(\R)$ has two connected
components.
Let $l=2x-3y-3z$, and let $l_1,\,l_2,\,l_3,\,l_4$ be the four linear
forms $x\pm y\pm z$. The extreme psd sextic with zeros in $S$ is
$$q_S\>=\>2496\cdot l_1l_2l_3l_4l^2+(888\,f+273\,xyz)^2$$
The sextic $q_S$ has no real zero beyond $S$, rather the point
$(0:1:-1)\in S$ is an $A_3$-singularity of $q_S$.
\end{lab}

\begin{lab}\Label{bsprob}%
The Robinson sextic \cite{rob} is the unique psd sextic through the
10 points that arise from permuting the coordinates of $(1:1:\pm1)$
and $(0:1:\pm1)$. It has symmetric equation
$$R\>=\>(x^6+y^6+z^6)-(x^4y^2+x^4z^2+y^4x^2+y^4z^2+z^4x^2+z^4y^2)
+3x^2y^2z^2.$$
Let $T$ consist of the 8 points $(\pm1:\pm1:1)$, $(\pm1:0:1)$,
$(0:\pm1:1)$. Then $I_3(T)$ is the pencil generated by $f=x^3-xz^2$
and $f'=y^3-yz^2$. Therefore $I_6(2T)$ is spanned by $f^2$, $ff'$,
$f'^2$ and $R$.
The nonic $N_T=\det J(f,f',R)$ has equation
$$j\>=\>z(z^2-x^2)(z^2-y^2)(x^2+xy+y^2-z^2)(x^2-xy+y^2-z^2).$$
Except for the line $z=0$, every irreducible component of $N_T$ is
contained in some cubic from the pencil $I_3(T)$.
Therefore, every admissible set $S=T\cup\{P\}$ has its 9th point $P$
on the line $z=0$.

Conversely fix $u\in\R\cup\{\infty\}$, let $P_u=(1:u:0)$ (with
$P_\infty=(0:1:0)$) and put $S_u=T\cup\{P_u\}$. Then the set $S_u$ is
admissible for all $u\notin\{0,\infty\}$. Indeed, there is a unique
cubic through $S_u$, namely $X_u=V(f_u)$ with $f_u=u^3f-f'$.
And the definiteness condition is satisfied since $X_u(\R)$ is
connected (resp.\ $X_u$ is a union of a line and a conic with two
real intersection points for $u=\pm1$). For $u=0,\,\infty$ the set
$S_u$ fails to be admissible since $P_u$ is a triple point of $X_u$.

Let $u\ne0,\,\infty$. To find the extreme sextic $q_u:=q_{S_u}$, and
possibly its 10th real zero $Q_u$, we may use the Coble nonic as in
\ref{remfind10thzero}. So we may pick $P'=(1:1:-1)$ (or any other
point in $T$), put $T'_u=S_u\setminus\{P'\}$, compute the nonic
$N_{T'_u}$ and intersect it with $N_T$. One finds that $N_{T'_u}$
intersects the line $z=0$ of $N_T$ in the four real points $(1:0:0)$,
$(0:1:0)$, $(u:1:0)$ and $(1:-u:0)$, besides $P_u$.
The sextics in $I_6(2S_u)$ through the first three points are
indefinite. Therefore we must have $Q_u=(1:-u:0)$. The corresponding
sextic is
$$q_u\>=\>u^6(u^2+2)(x^3-xz^2)^2-3u^4(x^2-z^2)(y^2-z^2)(x^2+y^2-z^2)
+(2u^2+1)(y^3-yz^2)^2$$
which must be the exposed psd sextic with ten zeros in $T\cup
\{(1:\pm u:0)\}$. For $u=\pm1$ we get $q_u=3R$.

Note that the example of $T$
is special since there exist lines containing three points of $T$
(and conics containing six points of $T$). These lines and conics are
necessarily irreducible components of the nonic $N_T$.
\end{lab}

\begin{lab}\Label{beispielsymm}%
For every real number $u$, the symmetric sextic
$$q_u\>=\>a\sum x_i^4x_j^2+b\sum x_i^4x_jx_k+c\sum x_i^3x_j^3+
d\sum x_i^3x_j^2x_k+e(x_0x_1x_2)^2$$
with
$$a=u^2,\ b=-2(u^2-2),\ c=2u^2,\ d=-2(u^2+u+2),\ e=6(u^2+4u+2)$$
is singular in the points that arise from
$$(1:0:0),\ (1:-1:0),\ (1:1:u),\ (1:1:1)$$
by permuting the coordinates. For $u\ne1$ these are 10 points.
The following hold:
\begin{itemize}
\item[(a)]
$q_u$ is psd and not sos for $u<-2$ or $u>1$,
\item[(b)]
$q_u$ is a sum of two squares for $u=-2$, and a square for $u=1$,
\item[(c)]
$q_u$ is indefinite for $-2<u<1$.
\end{itemize}
Here is a way to see this. Let $S_u$ be the set of points obtained by
permuting the coordinates of $(1:0:0)$, $(1:-1:0)$ and $(1:1:u)$
(so $|S_u|=9$ for $u\ne1$). We need to find the values of $u$ for
which $S_u$ is admissible. For $u=1$ we have $|S_u|=7$, for $u=-2$
the line $x_0+x_1+x_2=0$ contains six points of $S_u$. For any $u$
the symmetric cubic $X_u=V(f_u)$ with
$$f_u\>=\>2(u^2+u+1)\,x_0x_1x_2-u\sum x_i^2x_j$$
passes through $S_u$, and for $u\notin\{1,-2\}$ it is the only cubic
through $S_u$. For the singular values $u\in\{-2,-\frac12\}$, $X_u$
is the union of a line and a conic without real intersection point,
and for $u\in\{-1,0\}$, $X_u$ is a triangle whose singularities lie
in $S_u$. Therefore $S_u$ is not admissible for
$u\in\{-2,-1,-\frac12,0,1\}$.
For all $u\notin\{-2,\,-1,\,-\frac12,\,0,\,1\}$, the cubic $X_u$ is
nonsingular, and $X(\R)$ has two connected components.
For any of these values $u$, the divisor class $[\scrO_{X_u}(3)]
-\sum_{P\in S_u}[P]$ on $X_u$ is a nonzero $2$-torsion element in
$\Pic(X_u)$. To find the values $u$ for which this class is definite
(and hence $S_u$ is admissible), we follow the geometric approach in
\ref{admissgeom}.
Let $P=(1:0:0)\in S_u$ and put $T_u=S_u\setminus\{P\}$. The 9th
intersection point of the pencil $I_3(T_u)$ is $M_u=(u:1:1)$ (which
happens to lie in $T_u$).
The tangents to $X_u$ at $P$ and $M_u$ are $t_P=x_1+x_2$ and
$t_{M_u}=x_0-ux_1-ux_2$, so they intersect at $(0:1:-1)\in T_u$.
A~geometric sketch shows that the product $t_{M_u}\cdot t_P$ is
indefinite on $X_u(\R)$ if and only $-2<u<1$.

The discussion shows that for $-2<u<1$, the only psd sextic through
$S_u$ is $f_u^2$.
On the other hand, there exist psd non-sos sextics through
$S_u$ when $u<-2$ or $u>1$, and the unique extreme one is $q_u$.
\end{lab}

%===================================================================%

\end{document}